\tikzset{->-/.style={decoration={
  markings,
  mark=at position .5 with {\arrow{>}}},postaction={decorate}}}
\newcolumntype{?}{!{\vrule width 3pt}}
\newtheorem{theorem}{Theorem}[section]
\newtheorem{corollary}[theorem]{Corollary}
\newtheorem{proposition}[theorem]{Proposition}
\DeclareMathOperator{\im}{\text{Im}}
\begin{document}

\title{\bf Generalized Path Pairs \& \\Fuss-Catalan Triangles}
\author{Paul Drube\\
\small Department of Mathematics and Statistics\\[-0.8ex]
\small Valparaiso University\\[-0.8ex] 
\small Valparaiso, Indiana, U.S.A.\\
\small\tt paul.drube@valpo.edu\\
}

\maketitle

\begin{abstract}

Path pairs are a modification of parallelogram polyominoes that provide yet another combinatorial interpretation of the Catalan numbers.  More generally, the number of path pairs of length $n$ and distance $\delta$ corresponds to the $(n-1,\delta-1)$ entry of Shapiro's so-called Catalan triangle.  In this paper, we widen the notion of path pairs $(\gamma_1,\gamma_2)$ to the situation where $\gamma_1$ and $\gamma_2$ may have different lengths, and then enforce divisibility conditions on runs of vertical steps in $\gamma_2$.  This creates a two-parameter family of integer triangles that generalize the Catalan triangle and qualify as proper Riordan arrays for many choices of parameters.  In particular, we use generalized path pairs to provide a new combinatorial interpretation for all entries in every proper Riordan array $\mathcal{R}(d(t),h(t))$ of the form $d(t) = C_k(t)^i$, $h(t) = t \kern+1pt C_k(t)^k$, where $1 \leq i \leq k$ and $C_k(t)$ is the generating function for some sequence of Fuss-Catalan numbers (some $k \geq 2$).  Closed formulas are then provided for the number of generalized path pairs across an even broader range of parameters, as well as for the number of ``weak" path pairs with a fixed number of non-initial intersections.
\end{abstract}

\section{Introduction}
\label{sec: intro}

The Catalan numbers are a seemingly ubiquitous sequence of positive integers whose $n^{th}$ entry is $C_n = \frac{1}{n+1} \binom{2n}{n}$.  The Catalan numbers satisfy the recurrence $C_{n+1} = \sum_{i+j = n} C_{i} C_{j}$ for all $n \geq 0$, which translates to the ordinary generating function $C(t) = \sum_{n=0}^\infty C_n t^n$ as the relation $C(t) = t \kern+1pt C(t)^2 + 1$.  It follows that $C(t) = \frac{1-\sqrt{1-4t}}{2t}$.

Hundreds of combinatorial interpretations for the Catalan numbers have been compiled by Stanley \cite{Stanley}.  One such interpretation identifies $C_n$ with the number of parallelogram polyominoes with semiperimeter $n+1$.  These are ordered pairs of lattice paths $(\gamma_1,\gamma_2)$ that satisfy all of the following:

\begin{enumerate}
\item Both $\gamma_1$ and $\gamma_2$ are composed of $n+1$ steps from the step set $\lbrace E=(1,0),N=(0,1) \rbrace$, where $\gamma_1$ must begin with an $N$ step and $\gamma_2$ must begin with an $E$ step,
\item Both $\gamma_1$ and $\gamma_2$ begin at $(0,0)$ and end at the same point, and
\item $\gamma_1$ and $\gamma_2$ only intersect at their initial and final points.

\end{enumerate}

See Figure \ref{fig: intro polyomino example} for an illustration of all parallelogram polyominoes with semiperimeter $4$, noting that the number of such paths is $C_3 = 5$.

\begin{figure}[ht!]
\centering
\begin{tikzpicture}
[scale=.5,auto=left,every node/.style={circle, fill=black,inner sep=1.1pt}]
	\node (0*) at (0,0) {};
	\node (a1*) at (0,1) {};
	\node (a2*) at (0,2) {};
	\node (a3*) at (0,3) {};
	\node (a4*) at (1,3) {};
	\node (b1*) at (1,0) {};
	\node (b2*) at (1,1) {};
	\node (b3*) at (1,2) {};
	\node (b4*) at (1,3) {};
	\draw[very thick] (0*) to (a1*);	
	\draw[very thick] (a1*) to (a2*);
	\draw[very thick] (a2*) to (a3*);
	\draw[dotted] (a3*) to (a4*);
	\draw[very thick] (0*) to (b1*);
	\draw[very thick] (b1*) to (b2*);
	\draw[very thick] (b2*) to (b3*);
	\draw[dotted] (b3*) to (b4*);
\end{tikzpicture}
\hspace{.15in}
\begin{tikzpicture}
[scale=.5,auto=left,every node/.style={circle, fill=black,inner sep=1.1pt}]
	\node (0*) at (0,0) {};
	\node (a1*) at (0,1) {};
	\node (a2*) at (1,1) {};
	\node (a3*) at (2,1) {};
	\node (a4*) at (3,1) {};
	\node (b1*) at (1,0) {};
	\node (b2*) at (2,0) {};
	\node (b3*) at (3,0) {};
	\node (b4*) at (3,1) {};
	\draw[very thick] (0*) to (a1*);	
	\draw[very thick] (a1*) to (a2*);
	\draw[very thick] (a2*) to (a3*);
	\draw[dotted] (a3*) to (a4*);
	\draw[very thick] (0*) to (b1*);
	\draw[very thick] (b1*) to (b2*);
	\draw[very thick] (b2*) to (b3*);
	\draw[dotted] (b3*) to (b4*);
\end{tikzpicture}
\hspace{.15in}
\begin{tikzpicture}
[scale=.5,auto=left,every node/.style={circle, fill=black,inner sep=1.1pt}]
	\node (0*) at (0,0) {};
	\node (a1*) at (0,1) {};
	\node (a2*) at (0,2) {};
	\node (a3*) at (1,2) {};
	\node (a4*) at (2,2) {};
	\node (b1*) at (1,0) {};
	\node (b2*) at (1,1) {};
	\node (b3*) at (2,1) {};
	\node (b4*) at (2,2) {};
	\draw[very thick] (0*) to (a1*);	
	\draw[very thick] (a1*) to (a2*);
	\draw[very thick] (a2*) to (a3*);
	\draw[dotted] (a3*) to (a4*);
	\draw[very thick] (0*) to (b1*);
	\draw[very thick] (b1*) to (b2*);
	\draw[very thick] (b2*) to (b3*);
	\draw[dotted] (b3*) to (b4*);
\end{tikzpicture}
\hspace{.15in}
\begin{tikzpicture}
[scale=.5,auto=left,every node/.style={circle, fill=black,inner sep=1.1pt}]
	\node (0*) at (0,0) {};
	\node (a1*) at (0,1) {};
	\node (a2*) at (1,1) {};
	\node (a3*) at (1,2) {};
	\node (a4*) at (2,2) {};
	\node (b1*) at (1,0) {};
	\node (b2*) at (2,0) {};
	\node (b3*) at (2,1) {};
	\node (b4*) at (2,2) {};
	\draw[very thick] (0*) to (a1*);	
	\draw[very thick] (a1*) to (a2*);
	\draw[very thick] (a2*) to (a3*);
	\draw[dotted] (a3*) to (a4*);
	\draw[very thick] (0*) to (b1*);
	\draw[very thick] (b1*) to (b2*);
	\draw[very thick] (b2*) to (b3*);
	\draw[dotted] (b3*) to (b4*);
\end{tikzpicture}
\hspace{.15in}
\begin{tikzpicture}
[scale=.5,auto=left,every node/.style={circle, fill=black,inner sep=1.1pt}]
	\node (0*) at (0,0) {};
	\node (a1*) at (0,1) {};
	\node (a2*) at (0,2) {};
	\node (a3*) at (1,2) {};
	\node (a4*) at (2,2) {};
	\node (b1*) at (1,0) {};
	\node (b2*) at (2,0) {};
	\node (b3*) at (2,1) {};
	\node (b4*) at (2,2) {};
	\draw[very thick] (0*) to (a1*);	
	\draw[very thick] (a1*) to (a2*);
	\draw[very thick] (a2*) to (a3*);
	\draw[dotted] (a3*) to (a4*);
	\draw[very thick] (0*) to (b1*);
	\draw[very thick] (b1*) to (b2*);
	\draw[very thick] (b2*) to (b3*);
	\draw[dotted] (b3*) to (b4*);
\end{tikzpicture}

\caption{The $C_3 = 5$ parallelogram polyominoes with semiperimeter $4$, with the corresponding path pairs of length $3$ (and $\delta = 1$) appearing as the bolded edges.}
\label{fig: intro polyomino example}
\end{figure}

Generalizing the notion of parallelogram polyominoes are (fat) path pairs, as introduced by Shapiro \cite{Shapiro} and developed by Deutsch and Shapiro \cite{DS}.  A \textbf{path pair of length $\mathbf{n}$} is an ordered pair $(\gamma_1,\gamma_2)$ of lattice paths that satisfy all of the following:

\begin{enumerate}[itemsep=2pt]
\item Both $\gamma_1$ and $\gamma_2$ are composed of $n$ steps from the step set $\lbrace E=(1,0), N=(0,1) \rbrace$,
\item Both $\gamma_1$ and $\gamma_2$ begin at $(0,0)$, and
\item Apart from at $(0,0)$, $\gamma_1$ stays strongly above $\gamma_2$.
\end{enumerate}

Now consider the path pair $(\gamma_1,\gamma_2)$, and suppose that $\gamma_1$ terminates at $(x_1,y_1)$ while $\gamma_2$ terminates at $(x_2,y_2)$.  Clearly $x_1 < x_2$ and $y_1 > y_2$.  The path pair $(\gamma_1,\gamma_2)$ is said to have \textbf{distance} $\delta$ if $x_2 - x_1 = \delta$, and in this case we write $\vert \gamma_2 - \gamma_1 \vert = \delta$.  We henceforth use $\mathcal{P}_{n,\delta}$ to denote the set of all path pairs of length $n$ and distance $\delta$.

There is a simple bijection between $\mathcal{P}_{n,1}$ and parallelogram polynomials of semiperimeter $n+1$, via a map that adds an $E$ step to the end of $\gamma_1$ and a $N$ step to the end of $\gamma_2$.  See Figure \ref{fig: intro polyomino example} for an illustration of the $n=3$ case.  It follows that $\mathcal{P}_{n,1} = C_n$ for all $n \geq 0$.

Enumeration of $\mathcal{P}_{n,\delta}$ for all $\delta \geq 1$ and $n \geq 1$ was addressed by Shapiro \cite{Shapiro}, who identified $\vert \mathcal{P}_{n,\delta} \vert = \frac{2\delta}{2n} \binom{2n}{n-\delta}$ with the $(n-1,\delta-1)$ entry of his so-called Catalan triangle.  See Figure \ref{fig: intro Shapiro's Catalan triangle} for the first five rows of Shapiro's Catalan triangle, an infinite lower-triangular matrix (with zero entries suppressed) whose entries $d_{i,j}$ are generated by the recurrence $d_{0,0} = 1$ and $d_{i,j} = d_{i-1,j-1} + 2 d_{i-1,j} + d_{i-1,j+1}$ for all $i \geq 1, 0\leq j \leq i$.\footnote{Shapiro's Catalan triangle should not be confused with the ``Catalan triangle" whose $(i,j)$ entry is the ballot number $d_{i,j} = \frac{j+1}{i+1} \binom{2i-j}{i}$.  We alternatively refer to this second infinite lower-triangular matrix as the ballot triangle.  See Aigner \cite{Aigner} for connections between the ballot triangle and the Catalan triangle.}

\begin{figure}[ht!]
\centering
\setlength{\tabcolsep}{4pt}
\begin{tabular}{l l l l l}
1 & & & & \\
2 & 1 & & & \\
5 & 4 & 1 & & \\
14 & 14 & 6 & 1 & \\
42 & 48 & 27 & 8 & 1 \\
\end{tabular}
\caption{The first five rows of Shapiro's Catalan triangle.}
\label{fig: intro Shapiro's Catalan triangle}
\end{figure}

The Catalan triangle is a well-known example of a proper Riordan array.  Given a pair of generating functions $d(t)$ and $h(t)$ such that $d(0) \neq 0$, $h(0) = 0$, and $h'(0) \neq 0$, the associated proper Riordan array $\mathcal{R}(d(t),h(t))$ is the infinite lower-triangular matrix whose $(i,j)$ entry is $d_{i,j} = [t^i]d(t)h(t)^j$.  Here we use the standard notation in which $[t^i]$ identifies the coefficient of $t^i$ in a power series.  It may be verified that Shapiro's Catalan triangle is the proper Riordan array with $d(t) = C(t)^2$ and $h(t) = t \kern+2pt C(t)^2$.

For general information about Riordan arrays, see Rogers \cite{Rogers} or Merlini, Rogers, Sprugnoli and Verri \cite{MRSV}.  For a more focused discussion about how Riordan arrays similar to the Catalan triangle may be used to define so-called ``Catalan-like numbers", see Aigner \cite{Aigner2}.

Central to our work is the fact that every proper Riordan array $\mathcal{R}(d(t),h(t))$ possesses sequences of integers $\lbrace z_i \rbrace_{i=0}^\infty$ and $\lbrace a_i \rbrace_{i=0}^\infty$ such that 

\begin{equation}
\label{eq: Riordan array AZ recurrence}
d_{n,k} =
\begin{cases}
z_0 \kern+1pt d_{n-1,k} + z_1 \kern+1pt d_{n-1,k+1} + z_2 \kern+1pt d_{n-1,k+2} + \hdots & \text{for } k=0 \text{ and all } n \geq 1; \\[4pt]
a_0 \kern+1pt d_{n-1,k-1} + a_1 \kern+1pt d_{n-1,k} + a_2 \kern+1pt d_{n-1,k+1} + \hdots & \text{for all } k \geq 1 \text{ and } n \geq 1.
\end{cases}
\end{equation}

These sequences are referred to as the $Z$-sequence and the $A$-sequence of $\mathcal{R}(d(t),h(t))$, respectively.  When represented as the generating functions $Z(t) = \sum_i z_i \kern+1pt t^i$ and $A(t) = \sum_i a_i \kern+1pt t^i$, the $Z$- and $A$-sequences of a proper Riordan array are known to satisfy the relations

\begin{equation}
\label{eq: Riordan arrays AZ to dh}
d(t) = \frac{d(0)}{1-t \kern+1pt Z(h(t))} \hspace{.4in} h(t) = t \kern+1pt A(h(t))
\end{equation}

The defining recurrence of the Catalan triangle implies that it is a proper Riordan array with $Z(t) = 2 + t$ and $A(t) = 1 + 2t = t^2 = (1+t)^2$.  

As they play a major role in what follows, we pause to recap a few facts about the one-parameter Fuss-Catalan numbers, henceforth referred to as the $k$-Catalan numbers.  For any $k \geq 2$, the $k$-Catalan numbers are an integer sequence whose $n^{th}$ entry is $C_n^k = \frac{1}{kn+1} \binom{kn+1}{n}$.  Observe that the $k=2$ case corresponds to the ``original" Catalan numbers.  For any $k \geq 2$, the $k$-Catalan numbers satisfy the recurrence $C_{n+1}^k = \sum_{i_1 + \hdots + i_k} \kern-2pt C^k_{i_1} \kern-2pt \hdots \kern-1pt C^k_{i_k}$ for all $n \geq 0$, implying that their generating functions $C_k(t) = \sum_{n=0}^\infty C_n^k t^n$  satisfy $C_k(t) = t \kern+1pt C_k(t)^k + 1$.  For an introduction to the $k$-Catalan numbers, see Hilton and Pederson \cite{HP}.  For a list of combinatorial interpretations for the $k$-Catalan numbers, see Heubach, Li and Mansour \cite{HLM}.

\subsection{Outline of Results}
\label{subsec: outline}

The goal of this paper is to simultaneously explore several generalizations of path pairs.  Firstly, we eliminate the requirement that the two paths of $(\gamma_1,\gamma_2)$ have equal length, setting $\epsilon = \vert \gamma_2 \vert - \vert \gamma_1 \vert$ and examining the full range of differences $\epsilon \geq 0$ with $\vert \gamma_1 \vert \geq 0$.  We also enforce conditions on the $N$ steps of $\gamma_2$ that are designed to mirror the generalization of the Catalan numbers to the $k$-Catalan numbers.  We refer to the resulting combinatorial objects as $k$-path pairs of length $(n-\epsilon,n)$.

Section \ref{sec: k-path pairs} focuses upon the enumeration of $k$-path pairs.  In Subection \ref{subsec: k-path pairs, case 1}, we construct a two-parameter collection of infinite lower-triangular arrays $A^{k,\epsilon}$, whose entries correspond to the number of $k$-path pairs of varying lengths and distances.  For all $0 \leq \epsilon \leq k-1$, Theorem \ref{thm: k-path pairs as Riordan arrays} identifies the triangle $A^{k,\epsilon}$ with the proper Riordan array $\mathcal{R}(d(t),h(t))$ where $d(t) = C_k(t)^{k-\epsilon}$ and $h(t) = t \kern+1pt C_k(t)^k$.  In Subsection \ref{subsec: k-path pairs, case 2}, we directly enumerate sets of $k$-path pairs for all $k \geq 2$ and $\epsilon \leq 0$.  Theorem \ref{thm: summation decomposition corollary} uses the results of Subsection \ref{subsec: k-path pairs, case 2} to derive a closed formula for the size of all such sets, and Theorem \ref{thm: summation decomposition corollary 2} provides a significantly simplified formula within the range of $0 \leq \epsilon \leq (k-1)\delta$.

Section \ref{sec: weak path pairs} introduces a related generalization where we now allow the two paths $(\gamma_1,\gamma_2)$ to intersect away from $(0,0)$, so long as $\gamma_1$ stays weakly above $\gamma_2$ for the entirety of its length.  Theorem \ref{thm: weak path pair enumeration} applies the techniques of Section \ref{sec: k-path pairs} to derive a closed formula for the number of ``weak $k$-path pairs" whose paths intersect precisely $m$ times away from $(0,0)$, assuming that we restrict ourselves to the range $0 \leq \epsilon \leq (k-1)\delta$.

\section{Generalized $k$-Path Pairs}
\label{sec: k-path pairs}

Take any pair of integers $n,\epsilon$ such that $0 \leq \epsilon < n$.  Then define $\mathcal{P}^\epsilon_{n,\delta}$ to be the collection of ordered pairs $(\gamma_1,\gamma_2)$ of lattice paths that satisfy all of the following:

\begin{enumerate}[itemsep=2pt]
\item Both $\gamma_1$ and $\gamma_2$ begin at $(0,0)$ and use steps from $\lbrace E=(1,0), N=(0,1) \rbrace$,
\item $\gamma_2$ is composed of precisely $n$ steps, the first of which is an $E$ step,
\item $\gamma_1$ is composed of precisely $n-\epsilon$ steps, the first of which is a $N$ step,
\item $\gamma_1$ and $\gamma_2$ do not intersect apart from at $(0,0)$, and
\item The difference between the terminal $x$ coordinates of $\gamma_1$ and $\gamma_2$ is $\delta$.
\end{enumerate}

The case $\epsilon=0$ obviously corresponds to the original notion of path pairs.  If $\gamma_2$ terminates at $(x_2,y_2)$, then $\gamma_1$ terminates at $(x_1,y_1) = (x_2 - \delta,y_2 + \delta - \epsilon)$.  In particular, $y_1 - y_2 \geq 0$ precisely when $\delta \geq \epsilon$. 

Now fix $k \geq 2$, and consider some $(\gamma_1,\gamma_2) \in \mathcal{P}_{n,\delta}^\epsilon$.  The path pair $(\gamma_1,\gamma_2)$ is said to be a \textbf{$\mathbf{k}$-path pair of length $\mathbf{(n-\epsilon,n)}$ and distance $\mathbf{\delta}$} if the bottom path $\gamma_2 = E^1 N^{b_1} E^1 N^{b_2} \hdots E^1 N^{b_m}$ satisfies $b_i = (k \kern-1pt - \kern-1pt 2) \kern-4pt \mod \kern-3pt(k \kern-1pt - \kern-1pt 1)$ for all $i$.  Clearly, $2$-path pairs correspond to the notion of path pairs discussed above.

For any $k$-path pair $(\gamma_1,\gamma_2)$, the bottom path $\gamma_2$ must decompose into a sequence of length-$(k \kern-1pt - \kern-1pt 1)$ subpaths, each of which is either $N^{k-1}$ or $E^1 N^{k-2}$.  In particular, the length $n$ of $\gamma_2$ must be divisible by $k \kern-1pt - \kern-1pt 1$.  To avoid a large number of empty sets, we define $\mathcal{P}_{n,\delta}^{k,\epsilon}$ to be the collection of all $k$-path pairs of length $((k \kern-1pt - \kern-1pt 1)n-\epsilon,(k \kern-1pt - \kern-1pt 1)n)$ and distance $\delta$.

We continue to use the notation $\delta = \vert \gamma_2 - \gamma_1 \vert$ for the distance of $k$-path pairs.  For any $(\gamma_1,\gamma_2) \in \mathcal{P}_{n,\delta}^{k,\epsilon}$, it is always the case that $1 \leq \delta \leq n$, with the maximum distance of $n$ only being obtained by the pair with $\gamma_1 = N^{n-\epsilon}$ and $\gamma_2 = (E N^{k-2})^n$.  It follows that the sets $\mathcal{P}_{n,\delta}^{k,\epsilon}$ encompass all nonempty collections of $k$-path pairs if we range over $1 \leq \delta \leq n$ and $0 \leq \epsilon \leq (k-1)n$.

\subsection{Generalized $k$-Path Pairs with $0 \leq \epsilon \leq k-1$}
\label{subsec: k-path pairs, case 1}

In order to enumerate arbitrary $\mathcal{P}_{n,\delta}^{k,\epsilon}$, we fix $k,\epsilon$ and define a recurrence with respect to $n,\delta$.  This recurrence will directly generalize Shapiro's original recurrence for the Catalan triangle \cite{Shapiro}.  We begin with the range $0 \leq \epsilon \leq k-1$, where the recursion will eventually correspond to the $Z$- and $A$-sequences of a proper Riordan array.

\begin{theorem}
\label{thm: k-path pair recursion}
For any $k \geq 2$, $n \geq 1$, and $0 \leq \epsilon \leq k-1$,
$$\vert \mathcal{P}_{n,\delta}^{k,\epsilon} \vert = 
\begin{cases}
\displaystyle{\sum_{j=1}^k \binom{k}{j} \vert \mathcal{P}_{n-1,j}^{k,\epsilon} \vert - \sum_{j=1}^\epsilon \binom{\epsilon}{j} \vert \mathcal{P}_{n-1,j}^{k,\epsilon} \vert} & \text{for } \delta = 1, \text{and} \\[2.5ex]
\displaystyle{\sum_{j=0}^k \binom{k}{j} \vert \mathcal{P}_{n-1,\delta-1+j}^{k,\epsilon} \vert} & \text{for } \delta > 1.
\end{cases}$$
\end{theorem}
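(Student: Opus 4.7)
The plan is to establish the recurrence via a bijective decomposition: strip the final length-$(k-1)$ block from $\gamma_2$ together with the final $k-1$ steps from $\gamma_1$.  Given $(\gamma_1,\gamma_2) \in \mathcal{P}_{n,\delta}^{k,\epsilon}$, let $(\gamma_1',\gamma_2')$ denote the truncated pair, with $j_1 \in \{0,\ldots,k-1\}$ counting the $E$ steps removed from $\gamma_1$ and $j_2 \in \{0,1\}$ counting those in the removed block of $\gamma_2$ ($j_2 = 0$ if the block is $N^{k-1}$, $j_2 = 1$ if it is $EN^{k-2}$).  Coordinate tracking gives $(\gamma_1',\gamma_2') \in \mathcal{P}_{n-1,\delta+j_1-j_2}^{k,\epsilon}$, with non-intersection of the truncated pair inherited from the original.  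Setting $J = j_1 - j_2 + 1 \in \{0,\ldots,k\}$ so that the truncated distance is $\delta' = \delta - 1 + J$, the number of choices of $j_2$ together with the sequence of $\gamma_1$'s removed suffix producing a given $J$ is $\binom{k-1}{J-1} + \binom{k-1}{J} = \binom{k}{J}$ by Pascal's identity.

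For $\delta > 1$ the reverse operation is always valid: each $(\gamma_1',\gamma_2') \in \mathcal{P}_{n-1,\delta-1+J}^{k,\epsilon}$ with $\delta' \geq 1$ extends to a pair in $\mathcal{P}_{n,\delta}^{k,\epsilon}$ without creating new intersections.  The key structural input is that for $k \geq 3$ each block of $\gamma_2$ ends with at least one $N$ step, forcing the entry row $\alpha_{x_2'}$ of $\gamma_2'$ into its terminal column to satisfy $\alpha_{x_2'} < y_2'$; for $k = 2$ the same inequality follows from the non-intersection of $(\gamma_1',\gamma_2')$ itself.  Combined with $\delta \geq 2$, these bounds ensure that none of $\gamma_1$'s $k-1$ appended lattice points collide with $\gamma_2'$, with $\gamma_2$'s new block, or with $\gamma_1'$, so multiplying by $\binom{k}{J}$ and summing across $J$ yields the stated recurrence.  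For $\delta = 1$, however, the $J = 0$ case corresponds to $\delta' = 0$ (not in the family), so the first sum begins at $j = 1$; and when $1 \leq \delta' \leq \epsilon$ one has $y_2' - y_1' = \epsilon - \delta' \geq 0$ and certain extensions become invalid.  A careful analysis identifies the forbidden extensions as precisely those with $j_2 = 1$, $j_1 = \delta'$, whose $\delta'$ many $E$ steps all occur in the first $\epsilon$ positions of $\gamma_1$'s appended suffix: such an arrangement forces the entry row of $\gamma_1$ into column $x_2'$ to lie no higher than $y_2'$, colliding with $\gamma_2'$'s trajectory there.  Direct counting gives $\binom{\epsilon}{\delta'}$ such bad arrangements, yielding the correction term.

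The main obstacle will be the precise combinatorial identification of the forbidden extensions in the $\delta = 1$ case.  One must show (i) that only extensions with $j_2 = 1$ and $j_1 = \delta'$ can fail, (ii) that failure within these arises exactly from the subset-of-first-$\epsilon$-positions condition on the $E$-step placement, and (iii) that these arrangements number exactly $\binom{\epsilon}{\delta'}$.  Parts (ii) and (iii) follow from direct column-by-column geometry; part (i) requires the no-long-$E$-runs structural property of $\gamma_2$ to rule out collisions from other extensions in the intermediate columns traversed by $\gamma_1$'s suffix.
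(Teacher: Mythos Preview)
Your proposal is essentially the same approach as the paper's proof: both strip the final $k-1$ steps from $\gamma_1$ and the final length-$(k-1)$ block from $\gamma_2$, split into the two cases according to whether that block is $N^{k-1}$ or $EN^{k-2}$ (your $j_2 = 0$ versus $j_2 = 1$; the paper's $U_w$/$g_w$ versus $V_w$/$h_w$), verify that every extension is valid when $\delta \geq 2$, and for $\delta = 1$ identify the invalid extensions as exactly the $j_2 = 1$ suffixes whose $E$ steps all lie among the first $\epsilon$ positions, yielding the $\binom{\epsilon}{j}$ correction before combining via Pascal's identity. The paper organizes the geometric non-intersection check around the highest northwest corner of $\gamma_2$ (Figure~\ref{fig: recursion proof diagrams}) rather than your ``entry row into column $x_2'$'' language, but the underlying coordinate inequality is identical.
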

\begin{proof}

For any length-$(k - 1)$ word $w$ in the alphabet $\lbrace E,N \rbrace$, define $U_w$ to be the set of all $(\gamma_1,\gamma_2) \in \mathcal{P}_{n,\delta}^{k,\epsilon}$ such that $\gamma_1$ terminates with $w$ and $\gamma_2$ terminates with $N^{k-1}$.  If $w$ contains precisely $j$ instances of $E$, this implies $\gamma_1 = \eta_1 w$ and $\gamma_2 = \eta_2 N^{k-1}$ for some $(\eta_1,\eta_2) \in \mathcal{P}_{n-1,\delta+j}^{k,\epsilon}$.  Similarly define $V_w$ to be all $(\gamma_1,\gamma_2) \in \mathcal{P}_{n,\delta}^{k,\epsilon}$ such that $\gamma_1$ terminates with $w$ and $\gamma_2$ terminates with $E N^{k-2}$.  If $w$ contains $j$ precisely instances of $E$, then $\gamma_1 = \eta_1 w$ and $\gamma_2 = \eta_2 E N^{k-2}$ for some $k$-path pair $(\eta_1,\eta_2) \in \mathcal{P}_{n-1,\delta+j-1}^{k,\epsilon}$.  By construction, $\mathcal{P}_{n,\delta}^{k,\epsilon} = (\bigcup_w U_w) \cup (\bigcup_w V_w)$.

See Figure \ref{fig: recursion proof diagrams} for the general form of terminal subpaths in an element $(\gamma_1,\gamma_2)$ of $U_w$ or $V_w$.  In both diagrams, $(a,b)$ is fixed as the terminal point of $\gamma_1$, whereas the final $k-1$ steps of $\gamma_1$ are determined by $w$ and lie within the dotted triangle in the upper-left of each image.

Now take any length-$(k-1)$ word $w$ with precisely $j$ instances of $E$.  Our strategy is to enumerate $U_w$ and $V_w$ via consideration of the injective maps $g_w: \mathcal{P}_{n-1,\delta+j}^{k,\epsilon} \rightarrow S$, $g_w(\eta_1,\eta_2)=(\eta_1 w,\eta_2 N^{k-1})$ and $h_w: \mathcal{P}_{n-1,\delta+j-1}^{k,\epsilon} \rightarrow S$, $h_w(\eta_1,\eta_2)=(\eta_1 w,\eta_2 E N^{k-2})$.  Here $S$ denotes some collection of path-pairs whose elements may intersect apart from at $(0,0)$.  We clearly have $U_w \subseteq \im(g_w)$ and $V_w \subseteq \im(h_w)$ for any word $w$.  We also have $U_w = \im(g_w)$ iff every path pair in $\im(g_w)$ is non-intersecting apart from $(0,0)$, and $\im(h_w) = V_w$ iff every path pair in $\im (h_w)$ is non-intersecting apart from $(0,0)$.

Begin with $g_w$.  The path pair $g(\eta_1,\eta_2) = (\eta_1 w,\eta_2 N^{k-1})$ can only feature an intersection away from $(0,0)$ if the final $k-1$ steps of $\eta_1 w$ pass through some northwest corner of $\eta_2 N^{k-1}$.  As seen in Figure \ref{fig: recursion proof diagrams}, the largest possible $y$-coordinate for a northwest corner of $\eta_2 N^{k-1}$ is $b - \delta + \epsilon - 2k + 3$, whereas the terminal point of $\eta_1$ has a $y$-coordinate of at least $b - k + 1$.  Since we're assuming $\epsilon \leq k-1$, we have $\epsilon \leq \delta (k-1)$ for all $\delta \geq 1$. It follows that $b - \delta + \epsilon - 2k + 3 \leq b - k + 1$ for all $\delta \geq 1$, with the case of $b - d + \epsilon - 2k + 3 = b - k + 1$ being impossible because the input path $(\eta_1,\eta_2)$ was assumed to be non-intersecting away from $(0,0)$.  This implies that $\eta_1 w$ cannot intersect $\eta_2 N^{k-1}$ away from $(0,0)$ for any word $w$.

It follows that $g_w$ represents a bijection from $\mathcal{P}_{n-1,\delta+j}^{k,\epsilon}$ onto $U_w$ for every word $w$ when $\epsilon \leq k-1$.  Since there are $\binom{k-1}{j}$ words $w$ with precisely $j$ instances of $E$, a total of $\binom{k-1}{j}$ sets $U_w$ lie in bijection with $\mathcal{P}_{n-1,\delta+j}^{k,\epsilon}$ for each $0 \leq j \leq j-1$.  This gives

\begin{equation}
\label{eq: Uw summation}
\sum_w \vert U_w \vert = \sum_{j=0}^{k-1} \binom{k-1}{j} \vert \mathcal{P}_{n-1,\delta+j}^{k,\epsilon} \vert = \sum_{j=1}^{k} \binom{k-1}{j-1} \vert \mathcal{P}_{n-1,\delta+j-1}^{k,\epsilon} \vert.
\end{equation}

For $h_w$, we separately consider the cases of $\delta = 1$ and $\delta \geq 2$.  Begin by assuming $\delta \geq 2$.  We once again note that $h_w(\eta_1,\eta_2) = (\eta_1 w, \eta_2 EN^{k-2})$ has intersections away from $(0,0)$ only when the final $k-1$ steps of $\eta_1 w$ intersect some northwest corner of $\eta_2 EN^{k-2}$.  From Figure \ref{fig: recursion proof diagrams}, since $\delta \geq 2$ we see that the $y$-coordinate of such a corner can be at most $b - \delta + \epsilon - 2k + 4$.  Our assumptions of $\epsilon \leq k-1$ and $\delta \leq 2$ together ensure $\epsilon \leq k-3+\delta$ and thus that $b-\delta+\epsilon-2k+4 \leq b - k + 1$, with the case of $b-\delta+\epsilon-2k+4 = b - k + 1$ being impossible because we've assumed that $(\eta_1,\eta_2)$ lacks intersections away from $(0,0)$.  This implies that $\eta_1 w$ cannot intersect $\eta_2 E N^{k-2}$ away from $(0,0)$ for any word $w$ when $\delta \geq 2$, and thus that $h_w$ is a bijection from $\mathcal{P}_{n-1,\delta+j-1}^{k,\epsilon}$ onto $V_w$ for every word $w$ when $\delta \geq 2$.

When $\delta=1$, the map $h_w$ may introduce new intersections.  Fixing $w$, either every image $h_w(\eta_1,\eta_2) = (\eta_1 w, \eta_2 E N^{k-2})$ will have an intersection away from $(0,0)$, or every image $h_w(\eta_1,\eta_2)$ will lack such an intersection.  That first subcase implies that the corresponding set $V_w$ is empty, whereas that second subcase implies that $V_w$ is nonempty and in bijection with $\mathcal{P}_{n-1,\delta+j-1}^{k,\epsilon}$.  We only need to enumerate how many words $w$ (for each choice of $0 \leq j \leq k-1$) fall into each subcase.

As seen on the right side of Figure \ref{fig: recursion proof diagrams}, when $\delta=1$ the final northwest corner of $\eta_2 E N^{k-2}$ occurs at $(a,b+\epsilon-k+1)$.  Fixing a word $w$ with precisely $j$ instances of $E$, we also see that $\eta_1$ terminates at $(a-j,b-k+j+1)$.  This means that $\eta_1$ can only pass through $(a,b+\epsilon-k+1)$ if $j \leq \epsilon$.  For any such $j \leq \epsilon$, there are precisely $\binom{\epsilon}{j}$ words $w$ in which this additional intersection occurs.  As there are $\binom{k-1}{j}$ words $w$ with precisely $j$ instances of $E$, if $\epsilon \leq k-1$ we know that $V_w$ is nonempty for precisely $\binom{k-1}{j} - \binom{\epsilon}{j}$ choices of $w$.  Combining our results for $\delta \geq 2$ and $\delta =1$ gives

\begin{equation}
\label{eq: Vw summation}
\sum_w \vert V_w \vert = \begin{cases}
\displaystyle{\sum_{j=0}^{k-1} \binom{k-1}{j} \vert \mathcal{P}_{n-1,\delta+j-1}^{k,\epsilon} \vert} & \text{for } \delta \geq 2, \text{ and} \\[3.5ex]
\displaystyle{\sum_{j=0}^{k-1} \left( \binom{k-1}{j} - \binom{\epsilon}{j} \right) \vert \mathcal{P}_{n-1,\delta+j-1}^{k,\epsilon} \vert} & \text{for } \delta=1.
\end{cases}
\end{equation}

\noindent Once again noting that $\mathcal{P}_{n,\delta}^{k,\epsilon} = (\bigcup_w U_w) \cup (\bigcup_w V_w)$, for $\delta \geq 2$ we have
$$\vert \mathcal{P}_{n,\delta}^{k,\epsilon} \vert = \sum_w \vert U_w \vert + \sum_w \vert V_w \vert = \sum_{j=1}^{k} \binom{k-1}{j-1} \vert \mathcal{P}_{n-1,\delta+j-1}^{k,\epsilon} \vert + \sum_{j=0}^{k-1} \binom{k-1}{j} \vert \mathcal{P}_{n-1,\delta+j-1}^{k,\epsilon} \vert$$

$$= \sum_{j=0}^k \left( \binom{k-1}{j-1} + \binom{k-1}{j} \right) \vert \mathcal{P}_{n-1,\delta+j-1}^{k,\epsilon}\vert = \sum_{j=0}^k \binom{k}{j} \vert \mathcal{P}_{n-1,\delta+j-1}^{k,\epsilon} \vert.$$

\noindent For $\delta = 1$, the facts that $0 \leq \epsilon \leq k-1$ and $\vert \mathcal{P}_{n-1,0}^{k,\epsilon} \vert = 0$ prompt the similar result
$$\vert \mathcal{P}_{n,1}^{k,\epsilon} \vert = \sum_w \vert U_w \vert + \sum_w \vert V_w \vert = \sum_{j=1}^{k} \binom{k-1}{j-1} \vert \mathcal{P}_{n-1,j}^{k,\epsilon} \vert + \sum_{j=0}^{k-1} \left( \binom{k-1}{j} - \binom{\epsilon}{j} \right) \vert \mathcal{P}_{n-1,j}^{k,\epsilon} \vert$$
$$= \sum_{j=0}^{k} \left( \binom{k-1}{j-1} - \binom{k-1}{j} \right) \vert \mathcal{P}_{n-1,j}^{k,\epsilon} \vert - \sum_{j=0}^{k-1} \binom{\epsilon}{j} \vert \mathcal{P}_{n-1,j}^{k,\epsilon} \vert = \sum_{j=1}^{k} \binom{k}{j} \vert \mathcal{P}_{n-1,j}^{k,\epsilon} \vert - \sum_{j=1}^{\epsilon} \binom{\epsilon}{j} \vert \mathcal{P}_{n-1,j}^{k,\epsilon} \vert$$

\end{proof}

\begin{figure}[ht!]
\centering

\begin{tikzpicture}
[scale=.5,auto=left,every node/.style={circle, fill=black,inner sep=1.1pt}]
	\node (1) at (0,0) {};
	\node (1down) at (0,-3) {};
	\node (1left) at (-3,0) {};
	\draw[dotted] (1down) to (1left);
	\draw[dotted] (1down) to (1);
	\draw[dotted] (1left) to (1);
	\draw[->-] (-1.5,-1.5) to (0,0);
	\draw[->-] (-2.3,-0.7) to (0,0);	
	\draw[->-] (-0.7,-2.3) to (0,0);
	\node (2a) at (2,-2) {};
	\node (2b) at (2,-5) {};
	\node (2c) at (2,-7) {};
	\draw[thick] (2a) to (2b);
	\draw[thick] (2b) to (2c);
	\node[fill=none] (x1) at (0,.5) {\scalebox{.8}{$(a,b)$}};
	\node[fill=none] (x2) at (-2.1,-3.1) {\scalebox{.8}{$(a,b \kern-1.5pt - \kern-1.5pt k \kern-1.5pt + \kern-1.5pt 1)$}};
	\node[fill=none] (x3) at (-3.5,.5) {\scalebox{.8}{$(a \kern-1.5pt - \kern-1.5pt k \kern-1.5pt + \kern-1.5pt 1,b)$}};
	\node[fill=none] (x4) at (2.7,-1.5) {\scalebox{.8}{$(a \kern-1.5pt +\kern-1.5pt \delta,b \kern-1.5pt - \kern-1.5pt \delta \kern-1.5pt + \kern-1.5pt \epsilon)$}};
	\node[fill=none] (x4) at (-1.35,-4.9) {\scalebox{.8}{$(a \kern-1.5pt + \kern-1.5pt \delta \kern-1.5pt, b \kern-1.5pt - \kern-1.5pt \delta \kern-1.5pt + \kern-1.5pt \epsilon \kern-1.5pt - \kern-1.5pt k \kern-1.5pt + \kern-1.5pt 1)$}};
	\node[fill=none] (x4) at (-1.5,-6.9) {\scalebox{.8}{$(a \kern-1.5pt + \kern-1.5pt \delta \kern-1.5pt, b \kern-1.5pt - \kern-1.5pt \delta \kern-1.5pt + \kern-1.5pt \epsilon \kern-1.5pt - \kern-1.5pt 2k \kern-1.5pt + \kern-1.5pt 3)$}};
\end{tikzpicture}
\hspace{.9in}
\begin{tikzpicture}
[scale=.5,auto=left,every node/.style={circle, fill=black,inner sep=1.1pt}]
	\node (1) at (0,0) {};
	\node (1down) at (0,-3) {};
	\node (1left) at (-3,0) {};
	\draw[dotted] (1down) to (1left);
	\draw[dotted] (1down) to (1);
	\draw[dotted] (1left) to (1);
	\draw[->-] (-1.5,-1.5) to (0,0);
	\draw[->-] (-2.3,-0.7) to (0,0);	
	\draw[->-] (-0.7,-2.3) to (0,0);
	\node (2a) at (2,-2) {};
	\node (2b) at (2,-4) {};
	\node (2c) at (1,-4) {};
	\node (2d) at (1,-6) {};
	\draw[thick] (2a) to (2b);
	\draw[thick] (2b) to (2c);
	\draw[thick] (2c) to (2d);
	\node[fill=none] (x1) at (0,.5) {\scalebox{.8}{$(a,b)$}};
	\node[fill=none] (x2) at (-2.1,-3.1) {\scalebox{.8}{$(a,b \kern-1.5pt - \kern-1.5pt k \kern-1.5pt + \kern-1.5pt 1)$}};
	\node[fill=none] (x3) at (-3.5,.5) {\scalebox{.8}{$(a \kern-1.5pt - \kern-1.5pt k \kern-1.5pt + \kern-1.5pt 1,b)$}};
	\node[fill=none] (x4) at (2.7,-1.5) {\scalebox{.8}{$(a \kern-1.5pt +\kern-1.5pt \delta,b \kern-1.5pt - \kern-1.5pt \delta \kern-1.5pt + \kern-1.5pt \epsilon)$}};
	\node[fill=none] (x4) at (-2.85,-4.2) {\scalebox{.8}{$(a \kern-1.5pt + \kern-1.5pt \delta \kern-1.5pt - \kern-1.5pt 1, b \kern-1.5pt - \kern-1.5pt \delta \kern-1.5pt + \kern-1.5pt \epsilon \kern-1.5pt - \kern-1.5pt k \kern-1.5pt + \kern-1.5pt 2)$}};
	\node[fill=none] (x4) at (-2.95,-6) {\scalebox{.8}{$(a \kern-1.5pt + \kern-1.5pt \delta \kern-1.5pt - \kern-1.5pt 1, b \kern-1.5pt - \kern-1.5pt \delta \kern-1.5pt + \kern-1.5pt \epsilon \kern-1.5pt - \kern-1.5pt 2k \kern-1.5pt + \kern-1.5pt 4)$}};
\end{tikzpicture}

\vspace{-.4in}

\caption{Terminal subpaths for arbitrary $(\gamma_1,\gamma_2) \in U_w$ (left side) and arbitrary $(\gamma_1,\gamma_2) \in V_w$ (right side), as referenced in the proof of Theorem \ref{thm: k-path pair recursion}.}
\label{fig: recursion proof diagrams}
\end{figure}

It should be noted that the methods from Theorem \ref{thm: k-path pair recursion} may be extended to a somewhat broader range of parameters than $\epsilon \leq k-1$.  In particular, the summation of \eqref{eq: Uw summation} may be shown to hold for all $\epsilon \leq (k-1)\delta$, whereas the $\delta \geq 2$ summation of \eqref{eq: Vw summation} may be shown to hold for all $\epsilon \leq (k-1)(\delta-1)$.  Sadly, developing a general recursive relation for the full $\epsilon \leq \delta (k-1)$ range of Theorem \ref{thm: summation decomposition corollary 2} is extremely involved.  The enumerative usage of those recursions is also limited when $\epsilon > k-1$, as they no longer qualify as the $A$- and $Z$-sequences of a proper Riordan array.  As such, we delay the $\epsilon > k-1$ case until Subsection \ref{subsec: k-path pairs, case 2}, where generating function techniques may be applied to directly derive closed formulas from preexisting results for the general case.

For each choice of $k \geq 2$ and $0 \leq \epsilon \leq k-1$, the recursive relations of Theorem \ref{thm: k-path pair recursion} may be used to generate an infinite lower-triangular matrix $A^{k,\epsilon}$ whose $(i,j)$ entry is $a_{i,j}^{k,\epsilon} = \vert \mathcal{P}_{i+1,j+1}^{k,\epsilon} \vert$.  These $A^{k,\epsilon}$ qualify as proper Riordan arrays:

\begin{theorem}
\label{thm: k-path pairs as Riordan arrays}
For any $k \geq 2$ and $0 \leq \epsilon \leq k-1$, the integer triangle $A^{k,\epsilon}$ with $(i,j)$ entry $\vert \mathcal{P}_{i+1,j+1}^{k,\epsilon} \vert $ is the proper Riordan array $\mathcal{R}(C_k(t)^{k-\epsilon}, t C_k(t)^k)$, where $C_k(t)$ is the generating function for the $k$-Catalan numbers.
\end{theorem}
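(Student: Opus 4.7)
The plan is to use the fact that a proper Riordan array $\mathcal{R}(d(t),h(t))$ is uniquely determined by its $A$-sequence, $Z$-sequence, and initial entry $d(0)$. I will extract $A(t)$ and $Z(t)$ directly from the recursive formulas of Theorem \ref{thm: k-path pair recursion}, then verify via \eqref{eq: Riordan arrays AZ to dh} that the choices $d(t) = C_k(t)^{k-\epsilon}$ and $h(t) = t\,C_k(t)^k$ reproduce this data.

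First, I reindex Theorem \ref{thm: k-path pair recursion} in terms of $a_{i,j}^{k,\epsilon} = |\mathcal{P}_{i+1,j+1}^{k,\epsilon}|$. Substituting $n = i+1$ and $\delta = j+1$, the $\delta \geq 2$ case becomes
$$a_{i,j}^{k,\epsilon} \;=\; \sum_{\ell=0}^{k} \binom{k}{\ell}\, a_{i-1,\,j-1+\ell}^{k,\epsilon} \qquad (j \geq 1),$$
matching the $A$-sequence form of \eqref{eq: Riordan array AZ recurrence} with $a_\ell = \binom{k}{\ell}$, so $A(t) = (1+t)^k$. The $\delta = 1$ case, after shifting the index $\ell \mapsto \ell-1$ in both sums (and using $\binom{\epsilon}{\ell+1}=0$ for $\ell+1 > \epsilon$ to combine them), becomes
$$a_{i,0}^{k,\epsilon} \;=\; \sum_{\ell=0}^{k-1} \left( \binom{k}{\ell+1} - \binom{\epsilon}{\ell+1} \right) a_{i-1,\ell}^{k,\epsilon},$$
matching the $Z$-sequence form of \eqref{eq: Riordan array AZ recurrence} with $z_\ell = \binom{k}{\ell+1} - \binom{\epsilon}{\ell+1}$. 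Summing, $Z(t) = \bigl[(1+t)^k - (1+t)^\epsilon\bigr]/t$. The initial entry $a_{0,0}^{k,\epsilon} = |\mathcal{P}_{1,1}^{k,\epsilon}| = 1$ is witnessed by the unique pair $\gamma_1 = N^{k-1-\epsilon}$, $\gamma_2 = E N^{k-2}$.

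It then remains to show that the asserted $d(t)$ and $h(t)$ satisfy \eqref{eq: Riordan arrays AZ to dh} with these $A(t)$ and $Z(t)$. The key input is the functional equation $C_k(t) = 1 + t\,C_k(t)^k$, which immediately yields $1 + h(t) = C_k(t)$. Then $A(h(t)) = (1+h(t))^k = C_k(t)^k$, so $t\,A(h(t)) = t\,C_k(t)^k = h(t)$, verifying the second relation. Likewise
$$t\,Z(h(t)) \;=\; \frac{C_k(t)^k - C_k(t)^\epsilon}{C_k(t)^k} \;=\; 1 - C_k(t)^{\epsilon - k},$$
so $1 - t\,Z(h(t)) = C_k(t)^{\epsilon-k}$ and hence $d(0)/\bigl(1 - t\,Z(h(t))\bigr) = C_k(t)^{k-\epsilon} = d(t)$, verifying the first relation. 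Together with $d(0) = C_k(0)^{k-\epsilon} = 1 = a_{0,0}^{k,\epsilon}$, this uniquely identifies $A^{k,\epsilon}$ with $\mathcal{R}(C_k(t)^{k-\epsilon}, t\,C_k(t)^k)$.

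The main obstacle is the bookkeeping in the first step: the $\delta = 1$ recurrence of Theorem \ref{thm: k-path pair recursion} contains two sums with different upper limits, and one must be careful that their reindexing produces coefficients of the same powers of $t$ in $Z(t)$, which is where the hypothesis $\epsilon \leq k-1$ is crucial (it guarantees that the $\binom{\epsilon}{\ell+1}$ terms never exceed the range of $\binom{k}{\ell+1}$ terms). Once the $Z$- and $A$-sequences are in hand, the verification via \eqref{eq: Riordan arrays AZ to dh} is a short manipulation relying only on the $k$-Catalan functional equation.
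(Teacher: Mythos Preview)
Your proof is correct and follows essentially the same approach as the paper: extract the $A$- and $Z$-sequences from the recursion of Theorem~\ref{thm: k-path pair recursion}, then verify the relations of \eqref{eq: Riordan arrays AZ to dh} using the functional equation $C_k(t) = 1 + t\,C_k(t)^k$. You are somewhat more explicit than the paper in spelling out the reindexing and in checking the initial condition $a_{0,0}^{k,\epsilon}=1$, but the argument is the same.
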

\begin{proof}
By Theorem \ref{thm: k-path pair recursion}, the array $A^{k,\epsilon}$ has $A$-sequence $A(t) = (1+t)^k$ and $Z$-sequence $Z(t) = \frac{(1+t)^k - (1+t)^\epsilon}{t}$.  The $k$-Catalan relation $C_k(t) = t C_k(t)^k + 1$ may then be used to verify the identities of \eqref{eq: Riordan arrays AZ to dh}:

$$t A(h(t)) = t (1 + tC_k(t)^k)^k = t C_k(t)^k = h(t),$$
$$\frac{d(0)}{1 + t Z(h(t))} = \frac{1}{1- t \frac{(1+tC_k(t)^k)^k - (1+tC_k(t)^k)^\epsilon}{tC_k(t)^k}} = \frac{1}{1-\frac{C_k(t)^k - C_k(t)^\epsilon}{C_k(t)^k}} = \frac{C_k(t)^k}{C_k(t)^\epsilon} = d(t).$$
\end{proof}

As they take the form $\mathcal{R}(C_k^i,C_k^j)$ for some $k \geq 2$ and some $i,j > 0$, every integer triangle $A^{k,\epsilon}$ is a Fuss-Catalan triangle of the type introduced by He and Shapiro \cite{HeShapiro}.  Many specific triangles $A^{k,\epsilon}$ also correspond to Riordan arrays that are well-represented in the literature.  The triangle $A^{2,0}$ is Shapiro's Catalan triangle, while $A^{2,0}$ and $A^{2,1}$ are two of the admissible matrices discussed by Aigner \cite{Aigner}.  More generally, whenever $\epsilon = 0$ the triangle $A^{k,\epsilon}$ is a renewal array with ``identical" $A$- and $Z$-sequences, as investigated by Cheon, Kim and Shapiro \cite{CKS}.

In a slight deviation from He and Shapiro \cite{HeShapiro}, we refer to $A^{k,\epsilon}$ as the \textbf{$\mathbf{(k,\epsilon)}$-Catalan triangle}.  See Figure \ref{fig: k-Shapiro triangles} for all $(k,\epsilon)$-Catalan triangles with $k=2,3,4$.

\begin{figure}[ht!]
\scalebox{.87}{
\setlength{\tabcolsep}{8pt}
\begin{tabular}{p{.43in} l l l l}
&\hspace{.2in} $\mathbf{\epsilon=0}$& \hspace{.2in} $\mathbf{\epsilon=1}$& \hspace{.2in} $\mathbf{\epsilon=2}$& \hspace{.2in} $\mathbf{\epsilon=3}$\\[6pt] 
$\mathbf{k=2}$
&
\setlength{\tabcolsep}{3.25pt}
\begin{tabular}{l l l l l}
1 & & & & \\
2 & 1 & & & \\
5 & 4 & 1 & & \\
14 & 14 & 6 & 1 & \\
42 & 48 & 27 & 8 & 1 \\
\end{tabular}
&
\setlength{\tabcolsep}{3.25pt}
\begin{tabular}{l l l l l}
1 & & & & \\
1 & 1 & & & \\
2 & 3 & 1 & & \\
5 & 9 & 5 & 1 & \\
14 & 28 & 20 & 7 & 1 \\
\end{tabular}
& & \\[44pt]
$\mathbf{k=3}$
&
\setlength{\tabcolsep}{3.25pt}
\begin{tabular}{l l l l l}
1 & & & & \\
3 & 1 & & & \\
12 & 6 & 1 & & \\
55 & 33 & 9 &  1 & \\
273 & 182 &  63 & 12 & 1 \\
\end{tabular}
&
\setlength{\tabcolsep}{3.25pt}
\begin{tabular}{l l l l l}
1 & & & & \\
2 & 1 & & & \\
7 & 5 & 1 & & \\
30 & 25 & 8 & 1 & \\
143 & 130 & 52 & 11 & 1 \\
\end{tabular}
&
\setlength{\tabcolsep}{3.25pt}
\begin{tabular}{l l l l l}
1 & & & & \\
1 & 1 & & & \\
3 & 4 & 1 & & \\
12 & 18 & 7 & 1 & \\
55 & 88 & 42 & 10 & 1 \\
\end{tabular}
& \\[44pt]
$\mathbf{k=4}$
&
\setlength{\tabcolsep}{3.25pt}
\begin{tabular}{l l l l l}
1 & & & & \\
4 & 1 & & & \\
22 & 8 & 1 & & \\
140 & 60 & 12 & 1 & \\
969 & 456 & 114 & 16 & 1 \\
\end{tabular}
&
\setlength{\tabcolsep}{3.25pt}
\begin{tabular}{l l l l l}
1 & & & & \\
3 & 1 & & & \\
15 & 7 & 1 & & \\
91 & 49 & 11 & 1 & \\
612 & 357 & 99 & 15 & 1 \\
\end{tabular}
&
\setlength{\tabcolsep}{3.25pt}
\begin{tabular}{l l l l l}
1 & & & & \\
2 & 1 & & & \\
9 & 6 & 1 & & \\
52 & 39 & 10 & 1 & \\
340 & 272 & 85 & 14 & 1 \\
\end{tabular}
&
\setlength{\tabcolsep}{3.25pt}
\begin{tabular}{l l l l l}
1 & & & & \\
1 & 1 & & & \\
4 & 5 & 1 & & \\
22 & 30 & 9 & 1 & \\
140 & 200 & 72 & 13 & 1 \\
\end{tabular}
\end{tabular}
}

\caption{Top five rows for all $(k,\epsilon)$-Catalan triangles $A^{k,\epsilon}$ with $k=2,3,4$.}
\label{fig: k-Shapiro triangles}
\end{figure}

One immediate consequence of Theorem \ref{thm: k-path pairs as Riordan arrays} is a closed formula for the size of every set $\mathcal{P}_{n,\delta}^{k,\epsilon}$ when $0 \leq \epsilon \leq k-1$.  Observe that every cardinality $\vert \mathcal{P}_{n,\delta}^{k,\epsilon} \vert = \frac{k\delta-\epsilon}{kn-\epsilon} \binom{kn-\epsilon}{n-\delta}$ from Corollary \ref{thm: k-path pairs as k-Catalan coefficients} is the Raney number $R_{k,k\delta-\epsilon}(n-\delta)$.  As defined by Hilton and Pedersen \cite{HP}, the Raney numbers (two-parameter Fuss-Catalan numbers) are defined to be $R_{k,r}(n) = [t^n]C_k(t)^r$, with the original $k$-Catalan numbers corresponding to $C_n^k = R_{k,1}(n) = R_{k,k}(n-1)$.

\begin{corollary}
\label{thm: k-path pairs as k-Catalan coefficients}
For any $k \geq 2$ and $0 \leq \epsilon \leq k-1$,
$$\vert \mathcal{P}_{n,\delta}^{k,\epsilon} \vert = [t^{n-\delta}]C_k(t)^{k\delta-\epsilon} = \frac{k\delta-\epsilon}{kn-\epsilon} \binom{kn-\epsilon}{n-\delta}$$
\end{corollary}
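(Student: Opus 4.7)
The plan is to deduce this corollary directly from Theorem \ref{thm: k-path pairs as Riordan arrays} together with the known closed form for powers of the Fuss-Catalan generating function (the Raney numbers). Since the set $\mathcal{P}_{n,\delta}^{k,\epsilon}$ is indexed so that $\vert \mathcal{P}_{n,\delta}^{k,\epsilon} \vert$ sits in the $(n-1,\delta-1)$ entry of $A^{k,\epsilon}$, the Riordan identification gives an immediate coefficient extraction formula, and the rest is bookkeeping.

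First, I would unpack the Riordan entry. By definition of $\mathcal{R}(d(t),h(t))$, the $(n-1,\delta-1)$ entry of $A^{k,\epsilon} = \mathcal{R}(C_k(t)^{k-\epsilon}, tC_k(t)^k)$ equals
\[
[t^{n-1}]\, C_k(t)^{k-\epsilon}\bigl(tC_k(t)^k\bigr)^{\delta-1} = [t^{n-1}]\, t^{\delta-1}\, C_k(t)^{k-\epsilon + k(\delta-1)} = [t^{n-\delta}]\, C_k(t)^{k\delta-\epsilon}.
\]
This establishes the first equality $\vert \mathcal{P}_{n,\delta}^{k,\epsilon}\vert = [t^{n-\delta}] C_k(t)^{k\delta-\epsilon}$.

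Next, I would invoke the Raney/Lagrange inversion formula for powers of $C_k(t)$, namely $[t^m]\, C_k(t)^r = \frac{r}{km+r}\binom{km+r}{m}$, which follows from Lagrange inversion applied to the equation $C_k(t) = 1 + tC_k(t)^k$ and is already referenced in the paragraph immediately following the statement. Setting $m = n-\delta$ and $r = k\delta - \epsilon$ yields
\[
[t^{n-\delta}]\, C_k(t)^{k\delta-\epsilon} = \frac{k\delta-\epsilon}{k(n-\delta)+(k\delta-\epsilon)}\binom{k(n-\delta)+(k\delta-\epsilon)}{n-\delta} = \frac{k\delta-\epsilon}{kn-\epsilon}\binom{kn-\epsilon}{n-\delta},
\]
which is the desired closed formula.

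The only subtlety is a brief sanity check that $r = k\delta - \epsilon > 0$ and $m = n-\delta \geq 0$ in the allowed parameter range $0 \leq \epsilon \leq k-1$ and $1 \leq \delta \leq n$, so that the Raney formula applies without degeneration. Since $\delta \geq 1$ and $\epsilon \leq k-1$ force $k\delta - \epsilon \geq k - (k-1) = 1 > 0$, and $n \geq \delta$ by definition of $\mathcal{P}_{n,\delta}^{k,\epsilon}$, both hypotheses hold. There is no genuine obstacle here: the bulk of the work has already been done in proving Theorem \ref{thm: k-path pair recursion} and Theorem \ref{thm: k-path pairs as Riordan arrays}, and the corollary is essentially a one-line extraction once the Riordan structure is in hand.
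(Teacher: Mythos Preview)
Your proof is correct and follows essentially the same approach as the paper: both use the Riordan array identification of Theorem~\ref{thm: k-path pairs as Riordan arrays} to extract $[t^{n-\delta}]C_k(t)^{k\delta-\epsilon}$ from the $(n-1,\delta-1)$ entry, and then identify this coefficient with the Raney number. Your version is slightly more explicit in writing out the Raney formula step and checking the parameter constraints, but the substance is identical.
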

\begin{proof}
By the definition of $A^{k,\epsilon}$ we have $a_{i,j}^{k,\epsilon} = [t^i] C_k(t)^{k-\epsilon} (t C_k(t)^k)^j = [t^{i-j}] C_k(t)^{k-\epsilon + kj}$.  The corollary then follows from the fact that $\vert \mathcal{P}_{n,\delta}^{k,\epsilon} \vert = a_{n-1,\delta-1}^{k,\epsilon}$.
\end{proof}

\subsection{Generalized $k$-Path Pairs, All $\epsilon \geq 0$}
\label{subsec: k-path pairs, case 2}

If $\epsilon > k-1$, there need not be a bijection between $\mathcal{P}_{n,\delta}^{k,\epsilon}$ and some Raney number $R_{k,r}(n) = [t^n]C_k(t)^r$.  This implies that the cardinalities $\vert \mathcal{P}_{n,\delta}^{k,\epsilon} \vert$ cannot be organized into any Fuss-Catalan triangle.  One may still define an infinite lower-triangular array $A^{k,\epsilon}$ whose $(i,j)$ entry is $a_{i,j}^{k,\epsilon} = \vert \mathcal{P}_{i+1,j+1}^{k,\epsilon} \vert$, but for $\epsilon > k-1$ we always have $a_{0,0}^{k,\epsilon} = 0$ and the resulting arrays never qualify as a proper Riordan array.

For general $\epsilon$, we still have the following decomposition for $\vert \mathcal{P}_{n,\delta}^{k,\epsilon} \vert$:

\begin{proposition}
\label{thm: summation decomposition of k-path pairs}
Fix $n \geq 1$, $1 \leq \delta \leq n$, and $0 \leq \epsilon \leq (k-1)n$.  For any pair of non-negative integers $\epsilon_1,\epsilon_2$ such that $\epsilon = (k-1) \epsilon_1 + \epsilon_2$,

$$\vert \mathcal{P}_{n,\delta}^{k,\epsilon} \vert = \sum_{i=1}^{\delta} \binom{\epsilon_1}{\delta-i} \vert \mathcal{P}_{n-\epsilon_1,i}^{k,\epsilon_2} \vert.$$

\end{proposition}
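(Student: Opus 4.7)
I will establish a bijection $\Phi : \mathcal{P}_{n,\delta}^{k,\epsilon} \longrightarrow \bigsqcup_{i=1}^{\delta} \mathcal{P}_{n-\epsilon_1,i}^{k,\epsilon_2} \times \binom{[\epsilon_1]}{\delta-i}$ that truncates $\gamma_2$ and records the removed structure.  Given $(\gamma_1,\gamma_2) \in \mathcal{P}_{n,\delta}^{k,\epsilon}$, the forward map will decompose $\gamma_2 = w_1 w_2 \cdots w_n$ into its $n$ length-$(k-1)$ subpaths (each $w_j \in \{N^{k-1}, EN^{k-2}\}$), set $\gamma_2'' = w_1 \cdots w_{n-\epsilon_1}$ and $S = \{j \in [\epsilon_1] : w_{n-\epsilon_1+j} = EN^{k-2}\}$, and send $(\gamma_1,\gamma_2) \mapsto ((\gamma_1,\gamma_2''),S)$.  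Writing $\rho = \vert S \vert$, a quick check on lengths and terminal coordinates shows that the truncated pair has distance $\delta - \rho$ and the dimensions required to lie in $\mathcal{P}_{n-\epsilon_1,\delta-\rho}^{k,\epsilon_2}$, while non-intersection is inherited automatically since $\gamma_2'' \subseteq \gamma_2$.  The only nontrivial condition to verify will be the bound $\rho \leq \delta-1$, which guarantees $i := \delta-\rho \geq 1$.

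The hard part will be this bound.  When $x_1 \geq 1$, I plan to prove it by a column-by-column propagation argument.  At $x = 1$, $\gamma_2$ sits at height $0$ (having just completed its opening $E$-step) while $\gamma_1$'s $y$-values at $x = 1$ are all at least $1$ (due to its opening $N$-step), so non-intersection forces the maximum $y$-value of $\gamma_2$ at $x = 1$ to lie strictly below the minimum $y$-value of $\gamma_1$ at $x = 1$.  This separation propagates: once $\gamma_2$'s maximum at $x = c$ lies below $\gamma_1$'s minimum at $x = c$, the entry $y$-value of $\gamma_2$ at $x = c+1$ automatically starts below the entry $y$-value of $\gamma_1$, so the required disjointness at $x = c+1$ can only be realized by $\gamma_2$ again lying strictly below $\gamma_1$.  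Iterating up to $c = x_1$ forces the maximum $y$-value of $\gamma_2$ at column $x_1$ to be strictly less than $y_1$.  On the other hand, a direct calculation shows that after its first $n - \epsilon_1$ subpaths $\gamma_2$ sits at $(x_2 - \rho,\, y_1 + \epsilon_2 + \rho - \delta)$; if $\rho \geq \delta$, this $y$-coordinate is at least $y_1$, and monotonicity of $\gamma_2$ forces it to reach column $x_1$ at a $y$-value of at least $y_1$, contradicting the strict separation.  The edge case $x_1 = 0$ is handled by the simpler observation that $w_1 = EN^{k-2}$ sits in the retained prefix whenever $\epsilon_1 < n$, immediately giving $\rho \leq x_2 - 1 = \delta - 1$; and when $\epsilon_1 = n$ both sides of the claimed identity vanish.

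For the reverse direction, I will send $((\gamma_1,\gamma_2''),S)$ to the pair obtained by appending $\epsilon_1$ length-$(k-1)$ subpaths to $\gamma_2''$, placing $EN^{k-2}$ at the positions indexed by $S$ and $N^{k-1}$ elsewhere.  These appended subpaths all live in the region $x \geq x_2''$, while $\gamma_1$ is confined to $x \leq x_1 = x_2'' - i < x_2''$, so no new intersection can be created and the resulting pair lies in $\mathcal{P}_{n,\delta}^{k,\epsilon}$.  The forward and reverse maps will be seen to be mutual inverses, so they partition $\mathcal{P}_{n,\delta}^{k,\epsilon}$ according to the value $i \in \{1,\ldots,\delta\}$ and the choice $S \in \binom{[\epsilon_1]}{\delta-i}$.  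Summing $\binom{\epsilon_1}{\delta-i}\,\vert \mathcal{P}_{n-\epsilon_1,i}^{k,\epsilon_2} \vert$ over $i$ then yields the proposition.
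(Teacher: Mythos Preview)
Your argument is correct and follows essentially the same decomposition as the paper: truncate the last $\epsilon_1$ length-$(k-1)$ blocks of $\gamma_2$, land in some $\mathcal{P}_{n-\epsilon_1,i}^{k,\epsilon_2}$, and count the $\binom{\epsilon_1}{\delta-i}$ ways to rebuild the tail.  The only substantive difference is that the paper simply asserts $1 \leq i \leq \delta$, whereas you supply a full justification of the lower bound $i \geq 1$ via the column-propagation argument (and the $x_1 = 0$ edge case); that extra care is welcome, since the paper's one-line claim does hide exactly the geometric fact you prove.
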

\begin{proof}
As seen in Figure \ref{fig: path pair initial/final decomposition}, for any $(\gamma_1,\gamma_2) \in \mathcal{P}_{n,\delta}^{k,\epsilon}$ we may divide $\gamma_2$ into an initial subpath $\eta_1$ of length $n-(k-1)\epsilon_1$ and a terminal subpath $\eta_2$ of length $(k-1)\epsilon_1$.  As the length of $\eta_1$ is divisible by $k-1$, it is always the case that $(\gamma_1,\eta_1) \in \mathcal{P}_{n-\epsilon_1,i}^{k,\epsilon_2}$ for some $1 \leq i \leq \delta$.

Then consider the map $f: \mathcal{P}_{n,\delta}^{k,\epsilon} \rightarrow \bigcup_{i=1}^\delta \mathcal{P}_{n-\epsilon_1,i}^{k,\epsilon_2}$ where $f(\gamma_1,\gamma_2) = (\gamma_1,\eta_1)$.  This map is clearly surjective.  For any $1 \leq i \leq \delta$ and any $(\gamma_1,\eta_1) \in \mathcal{P}_{n-\epsilon_1,i}^{k,\epsilon_2}$, every way of appending precisely $\delta-i$ copies of $E^1 N^{k-2}$ and $\epsilon_1 - \delta + i$ copies of $N^{k-1}$ to the end of $\eta_1$ (in any order) produces an element of $\mathcal{P}_{n,\delta}^{k,\epsilon}$.  It follows that the inverse image $f^{-1}(\gamma_1',\gamma_2')$ of every $(\gamma_1',\gamma_2') \in \mathcal{P}_{n-\epsilon_1,i}^{k,\epsilon_2}$ has size $\binom{\epsilon_1}{\delta-i}$.  Ranging over $1 \leq i \leq \delta$ gives the required summation.
\end{proof}

\begin{figure}[ht!]
\centering
\begin{tikzpicture}
[scale=.5,auto=left,every node/.style={circle, fill=black,inner sep=1.1pt}]
	\draw[dotted] (0,0) to (5,0);
	\draw[dotted] (0,1) to (5,1);
	\draw[dotted] (0,2) to (5,2);
	\draw[dotted] (0,3) to (5,3);
	\draw[dotted] (0,4) to (5,4);
	\draw[dotted] (0,5) to (5,5);	
	\draw[dotted] (0,0) to (0,5);
	\draw[dotted] (1,0) to (1,5);
	\draw[dotted] (2,0) to (2,5);
	\draw[dotted] (3,0) to (3,5);
	\draw[dotted] (4,0) to (4,5);
	\draw[dotted] (5,0) to (5,5);
	\node (0*) at (0,0) {};
	\node (a1*) at (0,1) {};
	\node (a2*) at (0,2) {};
	\node (a3*) at (1,2) {};
	\node (a4*) at (1,3) {};
	\node (a5*) at (1,4) {};
	\node (b1*) at (1,0) {};
	\node (b2*) at (1,1) {};
	\node (b3*) at (2,1) {};
	\node (b4*) at (3,1) {};
	\node (b5*) at (4,1) {};
	\node (b6*) at (4,2) {};
	\node (b7*) at (5,2) {};
	\node (b8*) at (5,3) {};
	\node (b9*) at (5,4) {};
	\node (b10*) at (5,5) {};
	\draw[thick,dashed,color=red] (1,4) to (5,0);
	\draw[thick] (0*) to (a1*);	
	\draw[thick] (a1*) to (a2*);
	\draw[thick] (a2*) to (a3*);
	\draw[thick] (a3*) to (a4*);
	\draw[thick] (a4*) to (a5*);
	\draw[thick,color=blue] (0*) to (b1*);
	\draw[thick,color=blue] (b1*) to (b2*);
	\draw[thick,color=blue] (b2*) to (b3*);
	\draw[thick,color=blue] (b3*) to (b4*);
	\draw[thick,color=blue] (b4*) to (b5*);
	\draw[thick,color=green] (b5*) to (b6*);
	\draw[thick,color=green] (b6*) to (b7*);
	\draw[thick,color=green] (b7*) to (b8*);
	\draw[thick,color=green] (b8*) to (b9*);
	\draw[thick,color=green] (b9*) to (b10*);
\end{tikzpicture}
\caption{The decomposition of $\gamma_2$ for some $(\gamma_1,\gamma_2) \in \mathcal{P}_{10,4}^{2,5}$, as in the proof to Proposition \ref{thm: summation decomposition of k-path pairs}.  If $k > 2$, note that the initial subpath of $\gamma_2$ extends beyond the dotted diagonal line, until its length is divisible by $k-1$.} 
\label{fig: path pair initial/final decomposition}
\end{figure}

The summation on the right side of Proposition \ref{thm: summation decomposition of k-path pairs} may feature fewer than $\delta$ nonzero terms, as $\vert P_{n-\epsilon_1,i}^{k,\epsilon_2} \vert = 0$ when $n-\epsilon_1 < i$.  The decomposition $\epsilon = (k-1)\epsilon_1 + \epsilon_2$ also fails be be unique when $\epsilon \geq k-1$.  However, there always exists at least one decomposition of $\epsilon$ in which $\epsilon_2 \leq k-1$.

When $\epsilon \leq k-1$, this preferred decomposition of $\epsilon$ with $\epsilon_2 \leq k-1$ corresponds to $\epsilon_1 = 0$ and reduces the summation of Proposition \ref{thm: summation decomposition of k-path pairs} to the single term $\vert \mathcal{P}_{n,\delta}^{k,\epsilon} \vert$.  When $\epsilon > k-1$, choosing $\epsilon_1$ so that $\epsilon \leq k-1$ allows us to apply Corollary \ref{thm: k-path pairs as k-Catalan coefficients} to each term in the summation:

\begin{theorem}
\label{thm: summation decomposition corollary}
Fix $n \geq 1$, $1 \leq \delta \leq n$, and $0 \leq \epsilon \leq (k-1)n$.  For any pair of non-negative integers $\epsilon_1,\epsilon_2$ such that $\epsilon = (k-1)\epsilon_1 + \epsilon_2$ and $0 \leq \epsilon_2 \leq k-1$,

$$\vert \mathcal{P}_{n,\delta}^{k,\epsilon} \vert \ = \ [t^{n-\epsilon_1}] \kern+1pt \sum_{i=1}^\delta \binom{\epsilon_1}{\delta-i} t^i \kern+1pt C_k(t)^{ki-\epsilon_2} \ = \ \sum_{i=1}^\delta \frac{ki-\epsilon_2}{k(n-\epsilon_1)-\epsilon_2} \binom{\epsilon_1}{\delta-i} \binom{k(n-\epsilon_1) - \epsilon_2}{n-\epsilon_1 - i}.$$
\end{theorem}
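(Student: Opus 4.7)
The plan is to combine Proposition~\ref{thm: summation decomposition of k-path pairs} with Corollary~\ref{thm: k-path pairs as k-Catalan coefficients}. The key observation is that the hypothesis $0 \leq \epsilon_2 \leq k-1$ is precisely what is needed to place each inner cardinality $\vert \mathcal{P}_{n-\epsilon_1,i}^{k,\epsilon_2} \vert$ within the parameter range where Corollary~\ref{thm: k-path pairs as k-Catalan coefficients} produces a closed formula. The entire proof amounts to substitution.

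First I would invoke Proposition~\ref{thm: summation decomposition of k-path pairs} with the stipulated decomposition $\epsilon = (k-1)\epsilon_1 + \epsilon_2$ to write
$$\vert \mathcal{P}_{n,\delta}^{k,\epsilon} \vert = \sum_{i=1}^\delta \binom{\epsilon_1}{\delta-i} \vert \mathcal{P}_{n-\epsilon_1,i}^{k,\epsilon_2} \vert.$$
Next I would apply Corollary~\ref{thm: k-path pairs as k-Catalan coefficients} to each summand (with $n$ replaced by $n-\epsilon_1$ and $\delta$ replaced by $i$, which is legal because $\epsilon_2 \leq k-1$) to obtain
$$\vert \mathcal{P}_{n-\epsilon_1,i}^{k,\epsilon_2} \vert = [t^{n-\epsilon_1-i}]\, C_k(t)^{ki-\epsilon_2} = \frac{ki-\epsilon_2}{k(n-\epsilon_1)-\epsilon_2} \binom{k(n-\epsilon_1)-\epsilon_2}{n-\epsilon_1-i}.$$
Substituting the rightmost expression into the summation yields the second (closed-form) equality in the theorem. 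For the first (generating-function) equality I would use the trivial shift $[t^{n-\epsilon_1-i}]\, C_k(t)^{ki-\epsilon_2} = [t^{n-\epsilon_1}]\, t^i\, C_k(t)^{ki-\epsilon_2}$ and pull the linear operator $[t^{n-\epsilon_1}]$ outside the summation.

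The only loose ends to tie up are existence and boundary behavior. A valid decomposition always exists because for any $0 \leq \epsilon \leq (k-1)n$ one can take $\epsilon_1 = \lfloor \epsilon/(k-1) \rfloor$ and $\epsilon_2 = \epsilon \bmod (k-1)$, giving $0 \leq \epsilon_2 \leq k-2$. For boundary behavior, when $n - \epsilon_1 < i$ we have $\vert \mathcal{P}_{n-\epsilon_1,i}^{k,\epsilon_2} \vert = 0$ (since $\delta > n$ forces emptiness), and correspondingly the binomial $\binom{k(n-\epsilon_1)-\epsilon_2}{n-\epsilon_1-i}$ vanishes due to its negative lower index, so the summations remain consistent across all cases. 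There is no substantive obstacle here; all of the real combinatorial work has been done by Proposition~\ref{thm: summation decomposition of k-path pairs} and Corollary~\ref{thm: k-path pairs as k-Catalan coefficients}, and this theorem is essentially their composition.
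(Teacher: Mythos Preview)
Your proposal is correct and matches the paper's approach exactly: the paper does not even supply a formal proof block for this theorem, instead remarking just before the statement that choosing $\epsilon_2 \leq k-1$ ``allows us to apply Corollary~\ref{thm: k-path pairs as k-Catalan coefficients} to each term in the summation'' of Proposition~\ref{thm: summation decomposition of k-path pairs}. Your write-up simply makes this substitution explicit, including the coefficient-shift and boundary checks, so there is nothing to add or correct.
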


Beyond the $\epsilon \leq k-1$ case of Subsection \ref{subsec: k-path pairs, case 1}, there are several situations where the general identity of Theorem \ref{thm: summation decomposition corollary} simplifies to give an enumeration equivalent to Corollary \ref{thm: k-path pairs as k-Catalan coefficients}.

\begin{theorem}
\label{thm: summation decomposition corollary 2}
Fix $n \geq 1$ and $0 \leq \epsilon \leq (k-1)n$, and take any pair of non-negative integers $\epsilon_1,\epsilon_2$ such that $\epsilon = (k-1)\epsilon_1 + \epsilon_2$ and $0 \leq \epsilon_2 \leq k-1$.  For all $\delta > \epsilon_1$, as well as for all $0 \leq \epsilon \leq (k-1)\delta$, we have

$$\vert \mathcal{P}_{n,\delta}^{k,\epsilon} \vert = [t^{n-\delta}] C_k(t)^{k\delta-\epsilon} = \frac{k \delta - \epsilon}{kn - \epsilon} \binom{kn - \epsilon}{n - \delta}.$$
\end{theorem}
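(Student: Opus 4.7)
The plan is to manipulate the identity from Theorem~\ref{thm: summation decomposition corollary} until its sum collapses to a single Raney number. Reindexing via $j = \delta - i$ and factoring $t^\delta C_k(t)^{k\delta - \epsilon_2}$ out of each summand transforms that formula into
$$\vert \mathcal{P}_{n,\delta}^{k,\epsilon} \vert = [t^{n-\epsilon_1}] \, t^\delta C_k(t)^{k\delta-\epsilon_2} \sum_{j=0}^{\delta-1} \binom{\epsilon_1}{j} \bigl(t C_k(t)^k\bigr)^{-j}.$$
The $k$-Catalan functional equation $tC_k(t)^k = C_k(t)-1$ then converts each $(tC_k(t)^k)^{-j}$ into $(C_k(t)-1)^{-j}$, so the inner sum becomes a truncated binomial expansion in the formal quantity $(C_k(t)-1)^{-1}$.

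Under the hypothesis $\delta > \epsilon_1$ we have $\delta - 1 \geq \epsilon_1$, so every summand with $j > \epsilon_1$ already vanishes because $\binom{\epsilon_1}{j} = 0$. Extending the upper index from $\delta - 1$ to $\epsilon_1$ is therefore harmless, after which the binomial theorem collapses the sum:
$$\sum_{j=0}^{\epsilon_1} \binom{\epsilon_1}{j} \bigl(C_k(t)-1\bigr)^{-j} = \left(1 + \frac{1}{C_k(t)-1}\right)^{\epsilon_1} = \frac{C_k(t)^{\epsilon_1}}{\bigl(C_k(t)-1\bigr)^{\epsilon_1}}.$$
Substituting back and once more using $C_k(t)-1 = tC_k(t)^k$ to clear the denominator, the overall product reduces to $t^{\delta-\epsilon_1} C_k(t)^{k\delta - \epsilon_2 - (k-1)\epsilon_1} = t^{\delta-\epsilon_1} C_k(t)^{k\delta - \epsilon}$. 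Extracting the coefficient of $t^{n-\epsilon_1}$ yields $[t^{n-\delta}] C_k(t)^{k\delta - \epsilon}$, which equals the Raney number $\tfrac{k\delta-\epsilon}{kn-\epsilon}\binom{kn-\epsilon}{n-\delta}$ by the same extraction performed in Corollary~\ref{thm: k-path pairs as k-Catalan coefficients}.

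To cover the broader range $0 \leq \epsilon \leq (k-1)\delta$, note that the only decomposition not already handled by the first case is $\epsilon_1 = \delta$, $\epsilon_2 = 0$, which forces $\epsilon = (k-1)\delta$. The strategy here is to observe that $\vert \mathcal{P}_{n,\delta}^{k,\epsilon}\vert$ is a single fixed quantity that does not depend on the choice of splitting of $\epsilon$. When $\epsilon = (k-1)\delta$ one may alternatively take $\epsilon_1 = \delta - 1$, $\epsilon_2 = k-1$, which still satisfies $0 \leq \epsilon_2 \leq k-1$ but now has $\delta > \epsilon_1$. Applying Theorem~\ref{thm: summation decomposition corollary} to this alternative decomposition places us back inside the first case and the identity follows.

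The main obstacle I anticipate is justifying the formal manipulations inside the ring of Laurent series: the factor $(tC_k(t)^k)^{-j}$ has $t$-valuation $-j$, so the intermediate expressions live in $\mathbb{Z}[[t]][t^{-1}]$ rather than in $\mathbb{Z}[[t]]$. One must check that the final product $t^{\delta - \epsilon_1} C_k(t)^{k\delta - \epsilon}$ is a genuine power series so that coefficient extraction is legitimate; since $k\delta - \epsilon \geq k\delta - (k-1)\delta = \delta \geq 1$ whenever $\epsilon \leq (k-1)\delta$ and $\delta \geq \epsilon_1$ throughout the range considered, the cancellations do return an honest element of $t^{\delta - \epsilon_1}\mathbb{Z}[[t]]$, and the final coefficient extraction is rigorous.
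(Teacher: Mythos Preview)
Your proof is correct and follows essentially the same approach as the paper: both start from Theorem~\ref{thm: summation decomposition corollary}, collapse the sum via the binomial theorem together with the relation $tC_k(t)^k = C_k(t)-1$, and handle the boundary case $\epsilon = (k-1)\delta$ by switching to the alternative decomposition $(\epsilon_1,\epsilon_2) = (\delta-1,k-1)$. The only difference is that the paper reindexes by $j = i - (\delta - \epsilon_1)$ and factors out $t^{\delta-\epsilon_1}C_k(t)^{k(\delta-\epsilon_1)-\epsilon_2}$, which keeps every intermediate expression inside $\mathbb{Z}[[t]]$ and so sidesteps your Laurent-series justification entirely.
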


\begin{proof}
Beginning with Theorem \ref{thm: summation decomposition corollary}, when $\delta - \epsilon_1 > 0$ we may rewrite the bounds of the summation and then perform the change of variables $j = \epsilon_1 - \delta + i$ to give
$$\vert \mathcal{P}_{n,\delta}^{k,\epsilon} \vert \ = \ [t^{n-\epsilon_1}] \sum_{i=1}^\delta \binom{\epsilon_1}{\delta-i} t^i \kern+1pt C_k(t)^{ki-\epsilon_2} \ = \ [t^{n-\epsilon_1}] \sum_{i=\delta - \epsilon_1}^\delta \binom{\epsilon_1}{\delta-i} t^i \kern+1pt C_k(t)^{ki-\epsilon_2}$$
$$= \ [t^{n-\epsilon_1}] \sum_{j=0}^{\epsilon_1} \binom{\epsilon_1}{j} t^{j+\delta-\epsilon_1} C_k(t)^{k(j+\delta-\epsilon_1)-\epsilon_2} \ = \ [t^{n-\epsilon_1}] t^{\delta - \epsilon_1} C_k(t)^{k \delta - k \epsilon_1 - \epsilon_2} \sum_{j=0}^{\epsilon_1} \binom{\epsilon_1}{j} (t \kern+1pt C_k(t)^k)^j$$

\noindent Recognizing the binomial expansion and applying the identity $C_k(t) = t \kern+1pt C_k(t)^k + 1$ yields
$$\vert \mathcal{P}_{n,\delta}^{k,\epsilon} \vert \ = \ [t^{n-\delta}] C_k(t)^{k\delta - k \epsilon_1 - \epsilon_2} (1 + t \kern+1pt C_k(t)^k)^{\epsilon_1} \ = \ [t^{n-\delta}] C_k(t)^{k\delta-k\epsilon_1 - \epsilon_2} C_k(t)^{\epsilon_1} = [t^{n-\delta}] C_k(t)^{k\delta - \epsilon}$$

\noindent For the second range of parameters given, we separately consider $\epsilon < (k-1)\delta$ and $\epsilon = (k-1)\delta$.  For the first subcase we always have $\epsilon < (k-1) \delta \leq (k-1) \delta + \epsilon_2$ and $\epsilon - \epsilon_2 = (k-1)\epsilon_1 < (k-1)\delta$, which implies $\epsilon_1 < \delta$ and allows us to apply our first result.  When $\epsilon = (k-1)\delta$ we may choose $\epsilon_1 = \delta-1$ and $\epsilon_2 = k-1$, which again implies $\epsilon_1 < \delta$.
\end{proof}

\section{Weak $k$-Path Pairs}
\label{sec: weak path pairs}

In this section, we loosen our restriction that generalized $k$-path pairs  $(\gamma_1,\gamma_2)$ cannot intersect apart from $(0,0)$ and merely require that $\gamma_1$ stays weakly above $\gamma_2$.  Formally, for any $k \geq 2$ and any set of non-negative integers $n,\epsilon,\delta$ such that $0 \leq \epsilon \leq (k-1)n$ and $0 \leq \delta \leq n$, we define $\widetilde{\mathcal{P}}_{n,\delta}^{k,\epsilon}$ to be the collection of ordered pairs $(\gamma_1,\gamma_2)$ of lattice paths that satisfy all of the following:

\begin{enumerate}
\item Both $\gamma_1$ and $\gamma_2$ begin at $(0,0)$ and use steps from $\lbrace E = (1,0), N = (0,1) \rbrace$,
\item $\gamma_2$ is composed of precisely $(k-1)n$ steps, the first of which is an $E$ step,
\item $\gamma_1$ is composed of precisely $(k-1)n-\epsilon$ steps, the first of which is an $N$ step,
\item $\gamma_1$ stays weakly above $\gamma_2$,
\item The difference between the terminal $x$ coordinates of $\gamma_1$ and $\gamma_2$ is $\delta$, and
\item $\gamma_2 = E^1 N^{b_1} E^1 N^{b_2} \hdots E^1 N^{b_m}$ satisfies $b_i = (k \kern-1pt - \kern-1pt 2) \kern-4pt \mod \kern-3pt(k \kern-1pt - \kern-1pt 1)$ for all $i$.
\end{enumerate}

We refer to any element $(\gamma_1, \gamma_2) \in \widetilde{\mathcal{P}}_{n,\delta}^{k,\epsilon}$ as a \textbf{weak $\mathbf{k}$-path pair of distance $\mathbf{\delta}$}.  Notice that $\delta=0$ is now possible when we also have $\epsilon=0$, corresponding to the case where $\gamma_1$ and $\gamma_2$ terminate at the same point.  We refer to this special case of $\delta=\epsilon=0$ as a \textbf{closed (weak) $\textbf{k}$-path pair}.  All nonempty sets $\widetilde{\mathcal{P}}_{n,\delta}^{k,\epsilon}$ fall within the ranges $0 \leq \delta \leq n$ and $0 \leq \epsilon \leq (k-1)n$.

Elements of $(\gamma_1, \gamma_2) \in \widetilde{\mathcal{P}}_{n,\delta}^{k,\epsilon}$ may then be subdivided according to the number of intersections between $\gamma_1$ and $\gamma_2$.  We let $\widetilde{\mathcal{P}}_{n,\delta,m}^{k,\epsilon}$ denote the collection of $(\gamma_1, \gamma_2) \in \widetilde{\mathcal{P}}_{n,\delta}^{k,\epsilon}$ where $\gamma_1$ and $\gamma_2$ intersect precisely $m$ times away from $(0,0)$, and we define such path pairs to be weak $k$-path pairs with $m$ \textbf{returns}.  It is easy to show that $\widetilde{\mathcal{P}}_{n,\delta,m}^{k,\epsilon}$ is empty unless $0 \leq m \leq n$, and that $\epsilon$ places further restrictions on which $m$ are possible.  For example, $m=n$ is only possible when $\epsilon=0$.

We henceforth call a closed $k$-path pair with only $m=1$ return as an \textbf{irreducible (closed) $\mathbf{k}$-path pair}.  Any weak $k$-path pair $(\gamma_1,\gamma_2) \in \widetilde{\mathcal{P}}_{n,\delta,m}^{k,\epsilon}$ with precisely $m$ returns may be uniquely decomposed into a sequence of subpath pairs $(\gamma_{1,1},\gamma_{2,1}),\hdots,(\gamma_{1,{m+1}},\gamma_{2,{m+1}})$ such that $(\gamma_{1,i},\gamma_{2,i})$ corresponds to an irreducible $k$-path pair for each $1 \leq i \leq m$ (after translating each subpath pair so that it begins at the origin).  If $(\gamma_1,\gamma_2)$ is a closed $k$-path pair, then the final subpath pair $(\gamma_{1,m+1},\gamma_{2,m+1})$ is empty.  Otherwise, that final subpath pair corresponds to some $k$-path pair $(\gamma'_1,\gamma'_2) \in \mathcal{P}_{n',\delta}^{k,\epsilon}$ for some $n' > 0$.

To enumerate $\widetilde{\mathcal{P}}_{n,\delta}^{k,\epsilon}$ and the $\widetilde{\mathcal{P}}_{n,\delta,m}^{k,\epsilon}$, we begin by enumerating irreducible $k$-path pairs:

\begin{proposition}
\label{thm: irreducible path pair enumeration}
Fix $k \geq 2$.  For any $n \geq 1$,
$$\vert \widetilde{\mathcal{P}}_{n,0,1}^{k,0} \vert = [t^{n-1}] C_k(t)^{k-1} = \frac{k-1}{kn-1} \binom{kn-1}{n-1}.$$
\end{proposition}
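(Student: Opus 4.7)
The plan is to construct an explicit bijection between $\widetilde{\mathcal{P}}_{n,0,1}^{k,0}$ and $\mathcal{P}_{n,1}^{k,1}$, after which Corollary \ref{thm: k-path pairs as k-Catalan coefficients} (applied with $\delta = 1$ and $\epsilon = 1$) immediately yields
\[ |\widetilde{\mathcal{P}}_{n,0,1}^{k,0}| \ = \ |\mathcal{P}_{n,1}^{k,1}| \ = \ [t^{n-1}] C_k(t)^{k-1} \ = \ \frac{k-1}{kn-1}\binom{kn-1}{n-1}. \]

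First, I would analyze the terminal steps of an arbitrary $(\gamma_1,\gamma_2) \in \widetilde{\mathcal{P}}_{n,0,1}^{k,0}$. Both paths have length $(k-1)n$, terminate at a common point $(p,q)$, meet only at $(0,0)$ and $(p,q)$, and $\gamma_1$ stays weakly above $\gamma_2$. A short case analysis on the four combinations of final steps rules out three of them: in two, the penultimate points of $\gamma_1$ and $\gamma_2$ coincide, producing a forbidden extra intersection; in a third, $\gamma_2$ attains height $q$ before $\gamma_1$ does, violating the weak-above condition. Only the case in which $\gamma_1$ ends with $E$ and $\gamma_2$ ends with $N$ survives.

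The bijection then takes $(\gamma_1,\gamma_2) \mapsto (\gamma_1',\gamma_2)$, where $\gamma_1'$ is obtained from $\gamma_1$ by deleting its final $E$ step, so $\gamma_1'$ terminates at $(p-1,q)$. I would verify that $(\gamma_1',\gamma_2) \in \mathcal{P}_{n,1}^{k,1}$: the length of $\gamma_1'$ is $(k-1)n - 1$, giving $\epsilon = 1$; the terminal $x$-gap is $1$, giving $\delta = 1$; the divisibility condition on $\gamma_2$ and its initial $E$ step are untouched; and the only new non-intersection check is that $(p-1,q) \notin \gamma_2$, which follows from monotonicity once we recall that $\gamma_2$ reaches $(p,q)$ from $(p,q-1)$.

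The inverse appends a single $E$ step to the $\gamma_1$ of any $(\gamma_1,\gamma_2) \in \mathcal{P}_{n,1}^{k,1}$. This produces two paths of length $(k-1)n$ terminating at the common point $(p,q)$ and intersecting precisely at $(0,0)$ and $(p,q)$, i.e., an element of $\widetilde{\mathcal{P}}_{n,0,1}^{k,0}$. I expect no real difficulty beyond the terminal-step case analysis; once that is settled, the bijection amounts to adding or removing a single edge, and the enumeration follows immediately from the cited corollary.
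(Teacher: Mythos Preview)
Your proposal is correct and follows essentially the same approach as the paper: both establish a bijection $\widetilde{\mathcal{P}}_{n,0,1}^{k,0}\to\mathcal{P}_{n,1}^{k,1}$ by deleting the final step of $\gamma_1$ (which must be $E$), and then invoke Corollary~\ref{thm: k-path pairs as k-Catalan coefficients} with $\delta=\epsilon=1$. Your case analysis on the terminal steps simply fleshes out the paper's one-line observation that the last step of $\gamma_1$ is forced to be $E$.
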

\begin{proof}
For any $(\gamma_1,\gamma_2) \in \widetilde{\mathcal{P}}_{n,0,1}^{k,0}$, observe that the final step of $\gamma_1$ must be an $E$ step.  This means that $\widetilde{\mathcal{P}}_{n,0,1}^{k,0}$ lies in bijection with $\mathcal{P}_{n,1}^{k,1}$, via the map the deletes the final step of $\gamma_1$.  The result then follows from Corollary \ref{thm: k-path pairs as k-Catalan coefficients}.
\end{proof}

Observe that $\widetilde{\mathcal{P}}_{n,0,1}^{2,0}$ is equivalent to the original notion of parallelogram polynominoes with semiperimeter $n$.  Proposition \ref{thm: irreducible path pair enumeration} recovers this preexisting combinatorial interpretation of the Catalan numbers as $\vert \widetilde{\mathcal{P}}_{n,0,1}^{2,0} \vert = [t^{n-1}] C(t) = C_{n-1}$.  For any $k \geq 2$, one could define the elements of $\widetilde{\mathcal{P}}_{n,0,1}^{k,0}$ as $k$-parallelogram polyominoes with semiperimeter $(k-1)n$, although for $k > 2$ these objects do not provide a combinatorial interpretation for the $k$-Catalan numbers.

The primary application of Proposition \ref{thm: irreducible path pair enumeration} is that it may be used to quickly enumerate any collection $\widetilde{\mathcal{P}}_{n,\delta,m}^{k,\epsilon}$, assuming $\epsilon$ and $\delta$ fall within the range proscribed by Theorem \ref{thm: summation decomposition corollary 2}:

\begin{theorem}
\label{thm: weak path pair enumeration}
Fix $n \geq 1$ and $k \geq 2$.  For any non-negative integers $\delta,\epsilon,m$ such that $\epsilon=\delta=0$ or $0 \leq \epsilon \leq (k-1)\delta$,

$$\vert \widetilde{\mathcal{P}}_{n,\delta,m}^{k,\epsilon} \vert \ = \ [t^{n-\delta - m}] C_k(t)^{k \delta - \epsilon + (k-1)m} \ = \ \frac{k \delta - \epsilon + (k-1)m}{kn- \epsilon - m} \binom{kn - \epsilon - m}{n - m - \delta}.$$
\end{theorem}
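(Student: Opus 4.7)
The plan is to leverage the unique decomposition of a weak $k$-path pair in $\widetilde{\mathcal{P}}_{n,\delta,m}^{k,\epsilon}$ into an ordered sequence of $m$ irreducible closed $k$-path pairs followed by one (possibly empty) final piece, as spelled out immediately before the theorem statement, and to repackage it as an identity of generating functions in $t$ tracking the length parameter $n$. The two enumerative inputs are Proposition~\ref{thm: irreducible path pair enumeration}, which counts the irreducible building blocks, and Theorem~\ref{thm: summation decomposition corollary 2}, which counts the final piece whenever $0 \leq \epsilon \leq (k-1)\delta$.

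Concretely, I set $I(t) = \sum_{n \geq 1} |\widetilde{\mathcal{P}}_{n,0,1}^{k,0}| \, t^n$ for the irreducible blocks and $F(t)$ for the final piece, with the convention that in the closed subcase $\delta = \epsilon = 0$ the final piece is taken to be empty (contributing only the constant $1$). Proposition~\ref{thm: irreducible path pair enumeration} yields $I(t) = t \, C_k(t)^{k-1}$, while Theorem~\ref{thm: summation decomposition corollary 2} yields $F(t) = t^\delta C_k(t)^{k\delta - \epsilon}$ when $\delta \geq 1$; this latter formula evaluates to $1$ at $\delta = \epsilon = 0$, so both the closed and non-closed subcases are packaged uniformly. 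Because the total length parameter is additive across the decomposition, multiplying gives
$$\sum_{n \geq 0} |\widetilde{\mathcal{P}}_{n,\delta,m}^{k,\epsilon}| \, t^n \ = \ I(t)^m \, F(t) \ = \ t^{m+\delta} \, C_k(t)^{(k-1)m + k\delta - \epsilon}.$$

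Extracting $[t^n]$ then produces the first equality $|\widetilde{\mathcal{P}}_{n,\delta,m}^{k,\epsilon}| = [t^{n-m-\delta}] C_k(t)^{(k-1)m + k\delta - \epsilon}$ of the theorem. The closed-form expression follows by applying the Raney-number identity $[t^s] C_k(t)^r = \tfrac{r}{ks + r} \binom{ks+r}{s}$ with $s = n - m - \delta$ and $r = (k-1)m + k\delta - \epsilon$, and simplifying the denominator to $ks + r = kn - m - \epsilon$. The main obstacle is really just the bookkeeping check that the formula for $F(t)$ uniformly covers both the closed and non-closed subcases and that $F(t)$ correctly vanishes for the disallowed length $n'=0$ in the non-closed case (which it does, since $t^\delta$ then kills the constant term); once this is confirmed, the rest is a mechanical application of the Raney identity.
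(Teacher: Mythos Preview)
Your proof is correct and follows essentially the same approach as the paper: both use the decomposition into $m$ irreducible closed pieces plus a final tail, invoke Proposition~\ref{thm: irreducible path pair enumeration} and Theorem~\ref{thm: summation decomposition corollary 2} for the respective generating functions, multiply, and extract the coefficient of $t^n$. Your presentation is slightly slicker in that you observe $t^\delta C_k(t)^{k\delta-\epsilon}$ specializes to $1$ at $\delta=\epsilon=0$, allowing the closed and non-closed cases to be handled uniformly rather than separately as in the paper, and you make the Raney-number step for the closed form explicit; but these are cosmetic differences, not a different method.
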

\begin{proof}
By Proposition \ref{thm: irreducible path pair enumeration}, for any $k \geq 2$ the generating function of irreducible $k$-path pairs is $\sum_{i=0}^\infty \vert \widetilde{\mathcal{P}}_{n,0,1}^{k,0} \vert \kern+1pt t^i = t \kern+1pt C_k(t)^{k-1}$. From Theorem \ref{thm: summation decomposition corollary 2}, when $0 \leq \epsilon < (k-1)\delta$ we also have the generating function $\sum_{i=0}^\infty \vert \mathcal{P}_{n,\delta}^{k,\epsilon} \vert t^i = t^\delta C_k(t)^{k\delta-\epsilon}$.  We treat the two cases of the theorem statement separately.

For the $\epsilon = \delta = 0$ case, every element of $\widetilde{\mathcal{P}}_{n,0,m}^{k,0}$ may be uniquely decomposed into a sequence of $m$ non-empty irreducible $k$-path pairs.  It follows that

$$\sum_{i=0}^\infty \vert \widetilde{\mathcal{P}}_{i,0,m}^{k,0} \vert \kern+1pt t^i = (t \kern+1pt C_k(t)^{k-1})^m = t^m C_k(t)^{(k-1)m}.$$

\noindent In this case we then have

$$\vert \widetilde{\mathcal{P}}_{n,0,m}^{k,0} \vert = [t^n] t^m C_k(t)^{(k-1)m} = [t^{n-m}] C_k(t)^{(k-1)m}.$$

For the $0 \leq \epsilon < (k-1)\delta$ case, every element of $\widetilde{\mathcal{P}}_{n,\epsilon,m}^{k,\delta}$ may be uniquely decomposed into a sequence of $m$ non-empty irreducible $k$-path pairs and an element of $\mathcal{P}_{n',\delta}^{k,\epsilon}$ for some $0 < n' < n - m$.  Here we have

$$\sum_{i=0}^\infty \vert \widetilde{\mathcal{P}}_{i,\epsilon,m}^{k,\delta} \vert \kern+1pt t^i = (t \kern+1pt C_k(t)^{k-1})^m \kern+2pt t^\delta C_k(t)^{k\delta-\epsilon} = t^{\delta+m} C_k(t)^{k\delta - \epsilon + (k-1)m}.$$

\noindent For this second case we then have

$$\vert \widetilde{\mathcal{P}}_{n,\epsilon,m}^{k,\delta} \vert = [t^n] t^{\delta+m} C_k(t)^{k\delta - \epsilon + (k-1)m} = [t^{n-\delta-m}] C_k(t)^{k \delta - \epsilon + (k-1)m}.$$
\end{proof}

\end{document}